\newcommand{\C}{\mathbb{C}}
\newcommand{\Z}{\mathbb{Z}}
\newtheorem{Definition}{Definition}
\newtheorem{theorem}{Theorem}
\newtheorem{conjecture}[theorem]{Conjecture}
\newtheorem{lemma}{Lemma}
\newtheorem{remark}{Remark}
\newtheorem{proposition}{Proposition}
\newcommand{\diagi}{\begin{smallmatrix}\vspace{-0.5ex}\textrm{\normalsize diag}\\\vspace{-0.8ex}i\in\mathcal{I}_{n}\end{smallmatrix}}
\newcommand{\diagik}{\begin{smallmatrix}\vspace{-0.5ex}\textrm{\normalsize diag}\\\vspace{-0.8ex}i\in\mathcal{I}_{k}\end{smallmatrix}}
\newcommand{\diff}[2]{\frac{\partial {#1} }{\partial {#2} } }
\newcommand{\bx}[1]{{\bf #1}} 
\begin{document}

\title{Multigrid methods for block-Toeplitz linear systems:\\ convergence analysis and applications}
\author{Marco Donatelli, Paola Ferrari, Isabella Furci, Stefano Serra Capizzano, Debora Sesana}

\begin{abstract}
In the past decades, multigrid methods for linear systems having multilevel Toeplitz coefficient matrices with scalar entries have been largely studied. On the other hand, only few papers have investigated the case of block entries, where the entries are small generic matrices instead of scalars. In that case the efforts of the researchers have been mainly devoted to specific applications, focusing on algorithmic proposals but with very marginal theoretical results.

In this paper, we propose a general two-grid convergence analysis proving an optimal convergence rate independent of the matrix size, in the case of positive definite block Toeplitz matrices with generic blocks. In particular, the proof of the approximation property has not a straightforward generalization of the scalar case and in fact we have to require a specific commutativity condition on the block symbol of the grid transfer operator. 
Furthermore, we define a class of grid transfer operators satisfying the previous theoretical conditions and we propose a strategy to insure fast multigrid convergence even for more than two grids.

Among the numerous applications that lead to the block Toeplitz structure, high order Lagrangian finite element methods and staggered discontinuous Galerkin methods are considered in the numerical results, confirming the effectiveness of our proposal and the correctness of the proposed theoretical analysis.
\end{abstract}

\maketitle

\section{Introduction}
\label{sec:circ}

We are interested in solving large positive definite linear systems arising from particular finite element approximation of partial differential equations (PDEs) or coupled systems. As examples we consider the quadrilateral Lagrangian finite element methods (FEM) and staggered discontinuous Galerkin (DG) methods  for the incompressible Navier-Stokes equations, see \cite{qp, DMPS, DFFMST}. In these applications, when the PDE has constant coefficients, the resulting matrices possess a natural block-Toeplitz structure, up to a low rank correction due to boundary conditions. A block-Toeplitz matrix has a Toeplitz structure (constant entries along the diagonals), where the entries are generic small $d \times d$ matrices instead of scalars. With reference to the block-Toeplitz character, other possible applications are coupled systems of integro-differential equations: whenever the discretization of each equation has a Toeplitz structure, rearranging the unknowns by a proper permutation, the associated linear system retrieves the already mentioned block-Toeplitz structure, see \cite{D3S,MRS}.

After the seminal papers on multigrid methods for Toeplitz matrices  investigated in
\cite{FS1,CCS}, the results have been extended to multidimensional problems including the V-cycle convergence analysis, see \cite{ADS} and reference therein. A multigrid methods for block Toeplitz matrices has been proposed in \cite{HS2} and studied in the case of diagonal block symbol (defined below). This was then adapted and further analysed for specific applications, like those considered in \cite{DMPS, DDMNS}, but the results are strictly related to the block (multilevel) Toeplitz matrices in question. In practice, when the block symbol is not diagonal, there is still a substantial  lack of an effective projection proposal and of a rigorous convergence analysis. 

The first aim of the paper is to generalize the existing convergence results in the scalar settings for systems with coefficient matrix in the circulant algebra associated with a matrix-valued symbol. According to the relevant literature, the classical Ruge and St\"uben convergence analysis in \cite{RStub} is applied in order to split the two-grid convergence in smoothing property and approximation property. The smoothing property is proved for damped Jacobi with the relaxation parameter chosen in an interval depending on the symbol. The proof of the approximation property provides a generalization of the two conditions present in the scalar and requires a further commutativity condition on the matrix-valued symbol of the grid transfer operator. In order to extend the results to V-cycle, we propose a measure of the ill-conditioning of the symbol at the coarser levels in order to choose a robust grid transfer operator.

We exploit the algebra structure of circulant matrices for the theoretical analysis of the two-grid and V-cycle algorithms, and we consider Toeplitz matrices for practical applications. This is a common approach and it is supported by the fact that the symbol analysis for Toeplitz matrices is an algebraic generalization of the local Fourier analysis of multigrid methods, see \cite{D}.

Finally, we present some numerical results for quadrilateral Lagrangian FEM and staggered discontinuous Galerkin methods for the incompressible Navier-Stokes equations. The results confirm the theoretical analysis proving an optimal convergence rate also for the V-cycle. Here for optimal rate we mean that the convergence speed is linear and independent of the matrix size and often mildly depending on other relevant parameters such as the dimensionality of the domain or the polynomial degree in the considered FEM/DG methods. 

The paper is organized as follows.
In Section \ref{sect:notation} we fix the notation and we recall the main properties of block-circulant matrices with their main algebraic, structural, and spectral properties.
In Section \ref{sect:two_grid} we give an overview on the two-grid method with a particular focus on the convergence results.
In Section \ref{sect:project_circ} we  briefly sketch the basic ideas for defining the projecting operators for block-circulant matrices. 
Convergence analysis and optimality proof of the two-grid technique are reported in Section \ref{sect:proof_circ}. As a conclusion of the theoretical analysis, we define an ill-conditioning of the coarse problem in order to choose a robust grid transfer operator for the V-cycle method.
In Section \ref{sect:experiments} we study the applicability of our two-grid and V-cycle procedures to linear systems stemming from the approximation of differential operators. In particular, in Subsections \ref{subsection:FEM}-\ref{subsection:FEM2D} we will report numerical results for the $\mathbb{Q}_{\deg}$ Lagrangian FEM applied to the Poisson problem. In Subsection \ref{subsection:staggeredDG} we will focus instead on the matrices arising from the discretization by staggered discontinuous Galerkin methods of the incompressible Navier-Stokes equations.
Section \ref{sect:final} contains conclusions and discusses few issues to be considered in future works.

\section{Notation}\label{sect:notation}

In the current section we fix the notation for matrix and function norms, matrix-valued trigonometric polynomials, block-Toeplitz and block-circulant matrices.

\subsection{Norms} 
Given $1\le p<\infty$ and a vector $x \in \mathbb{C}^{n}$, we denote by $\|x\|_{p}$ the $p$-norm of $x$ and by $|\cdot|_{p}$ the associated induced matrix norm over $\mathbb{C}^{n\times n}$. If $X$ is positive definite,
$\|\cdot\|_{X}=|X^{1/2}\cdot|_{2}$ denotes the Euclidean norm
weighted by $X$ on $\mathbb{C}^{n}$. Moreover, if we denote by $\sigma_{j}(X)$, $j=1,\ldots,n$, the singular values of a matrix $X\in\mathbb{C}^{n\times n}$, $\|\cdot\|_{1}$ is the so called trace-norm on $\mathbb{C}^{n\times n}$ defined by
$\|\cdot\|_{1}=\sum_{j=1}^{n}\sigma_{j}(\cdot)$, and 
$\|\cdot\|_{\infty}=\max_{j=1,\ldots,n}\sigma_{j}(\cdot)$ is the spectral norm. 
Finally, if $X$ and $Y$ are Hermitian matrices, then
the notation $X\leq Y$ means that $Y-X$ is nonnegative definite.

\subsection{Block-Toeplitz matrices} 
 Let $\mathcal{M}_d$
be the linear space of the complex $d\times d$ matrices and let
$f:Q\to\mathcal M_d$, with $Q=(-\pi,\pi)$. We say that $f \in L^p(d)$ (resp. is
measurable) if all its components $f_{ij}:Q\to\mathbb C,\
i,j=1,\ldots,d,$ belong to $L^p(d)$ (resp. are measurable) for $1\le
p\le\infty$. 

\begin{Definition}\label{def-multilevel}
Let the Fourier coefficients of  a given function $f$, defined as $f\in L^1(d)$, be
\begin{align*}
  \hat f_j:=\frac1{2\pi}\int_{Q}f(\theta){\rm e}^{- \iota j \theta} d\theta\in\mathcal M_d,
  \qquad  \iota^2=-1, \, j\in\mathbb Z.
\end{align*}
Then, the block-Toeplitz matrix associated with $f$ is the matrix of order $dn$ given by
\begin{align*}
  T_n(f)=\sum_{|j|<n}J_{n}^{(j)}\otimes\hat f_j,
\end{align*}
where $\otimes$ denotes the (Kronecker) tensor product of matrices. The term
$J_n^{(j)}$ is the matrix of order $n$ whose $(i,k)$ entry equals $1$ if $i-k=j$
and zero otherwise.  
\end{Definition}
The set $\{T_n(f)\}_{n\in\mathbb N}$ is
called the \textit{family of block-Toeplitz matrices generated by $f$}, that
in turn is referred to as the \textit{generating function or the symbol of
$\{T_n(f)\}_{n\in\mathbb N}$}.

\subsection{Block-circulant matrices} 
\label{subsect:background_circ_twogrid}
In the scalar case, when $d=1$, if $f$ is a polynomial we can define the circulant matrix  generated by $f$ by
\begin{align*}
  \mathcal{A}_{n}(f)=F_n \diagi(f(\theta_i^{(n)}))F_{n}^{H},
\end{align*} 
where $F_{n}=\frac{1}{\sqrt{n}}\left[e^{-\imath j\theta_i^{(n)}}\right]_{i,j=0}^{n-1}$, the grid points $\theta_i^{(n)}$ are $\frac{2\pi i}{n}$ and $i$ belongs to the index range $\mathcal{I}_{n} = \{0,\ldots,n-~1\}$. Circulant matrices form an algebra $\mathcal{C}$ of normal matrices.

In the block-case, $d>1$, if $f\in\mathcal{M}_d$ is a matrix-valued trigonometric polynomial the block-circulant matrix generated by $f$ is defined as
\begin{equation*}
  \mathcal{A}_{n}(f)=(F_n\otimes I_{d}) \diagi(f(\theta_i^{(n)}))(F_{n}^{H}\otimes I_{d}),
\end{equation*}   
where $\otimes$ is the tensor (Kronecker) product of matrices and $\diagi(f(\theta_i^{(n)}))$ is the block-diagonal matrix where the block-diagonal elements are the evaluation of $f$ on the grid points $\theta_i^{(n)}$, $i\in\mathcal{I}_{n}$. The matrix $\mathcal{A}_{n}$ has size $dn \times dn$.

\section{Two-grid method}\label{sect:two_grid}

Let $A_{n}\in\mathbb{C}^{n\times n}$, and
$x_{n},\,b_{n}\in\mathbb{C}^{n}$. Let
$p_n^k\in\mathbb{C}^{n\times k}$, $k<n$, be a given full-rank
matrix and let us consider a class of iterative methods of the form
\begin{align}\label{prepost}
  x_{n}^{(j+1)}=V_{n}x_{n}^{(j)}+\tilde{b}_{n}:=\mathcal{V}(x_{n}^{(j)},\tilde{b}_{n}),
\end{align}
where $A_{n}=W_{n}-N_{n}$, $W_{n}$ nonsingular matrix, $V_{n}:=I_{n}-W_{n}^{-1}A_{n}\in\mathbb{C}^{n\times n}$,
and $\tilde{b}_{n}:=W_{n}^{-1}b_{n}\in\mathbb{C}^{n}$.
A Two-Grid Method (TGM) is defined by the following algorithm:
\begin{center}
\begin{tabular}{l}
\vspace{1ex}
TGM$(V_{n,\rm{pre}}^{\nu_{\rm{pre}}},V_{n,\rm{post}}^{\nu_{\rm{post}}},p_{n}^{k})(x_{n}^{(j)})$
\vspace{1ex}\\
\hline
  0. $\tilde{x}_{n}=\mathcal{V}_{n,\rm{pre}}^{\nu_{\rm{pre}}}(x_{n}^{(j)},\tilde{b}_{n,\rm{pre}})$\\
  1. $d_{n}=A_{n}\tilde{x}_{n}-b_{n}$\\
  2. $d_{k}=(p_{n}^{k})^{H}d_{n}$\\
  3. $A_{k}=(p_{n}^{k})^{H}A_{n}p_{n}^{k}$\\
  4. Solve $A_{k}y=d_{k}$\\
  5. $\hat{x}_{n}=\tilde{x}_{n}-p_{n}^{k}y$\\
  6. $x_{n}^{(j+1)}=\mathcal{V}_{n,\rm{post}}^{\nu_{\rm{post}}}(\hat{x}_{n},\tilde{b}_{n,\rm{post}})$
\end{tabular}
\end{center}

Steps $1.\rightarrow5.$ define the ``coarse grid correction'' that
depends on the projecting operator $p_{n}^{k}$, while Step $0.$ and
Step $6.$ consist, respectively, in applying $\nu_{\rm{pre}}$ times
and $\nu_{\rm{post}}$ times a ``pre-smoothing iteration'' and a
``post-smoothing iteration'' of the generic form given in
$(\ref{prepost})$.
The global iteration matrix of the TGM is then given by
\begin{align*}
  {\rm TGM}(V_{n,\rm{pre}}^{\nu_{\rm{pre}}},V_{n,\rm{post}}^{\nu_{\rm{post}}},p_{n}^{k})=
  V_{n,\rm{post}}^{\nu_{\rm{post}}}
  \left[I_{n}-p_{n}^{k}\left((p_{n}^{k})^{H}
  A_{n}p_{n}^{k}\right)^{-1}(p_{n}^{k})^{H}A_{n}\right]V_{n,\rm{pre}}^{\nu_{\rm{pre}}}.
\end{align*}

In the present paper, we are interested in proposing such a kind of
techniques in the case where $A_{n}$ is a block-circulant matrix.
First we recall some general convergence results from the theory of the algebraic multigrid method given in \cite{RStub}.
For the optimality proof of TGM we need the following result, see \cite[Theorem 5.2]{RStub} and \cite[Remark 2.2]{ADS}.

\begin{theorem}\label{teoconv}
Let $A_{n}$ be a positive definite matrix of size $n$ and let $V_{n}$ be defined as in the {\rm TGM} algorithm.
Assume
\begin{itemize}
\item[(a)] $\exists\alpha_{\rm{post}}>0\,:\;\|V_{n,\rm{post}}x_{n}\|_{A_{n}}^{2}\leq\|x_{n}\|_{A_{n}}^{2}-\alpha_{\rm{post}}\|x_{n}\|_{A_{n}^2}^{2},\qquad \forall x_{n}\in\mathbb{C}^{n},$
\item[(b)] $\exists\gamma>0\,:\;\min_{y\in\mathbb{C}^{k}}\|x_{n}-p_{n}^{k}y\|_{2}^{2}\leq \gamma\|x_{n}\|_{A_{n}}^{2},\qquad \forall x_{n}\in\mathbb{C}^{n}.$
\end{itemize}
Then $\gamma\geq\alpha_{\rm{post}}$ and
\begin{align*}
  \|{\rm TGM}(I,V_{n,\rm{post}}^{\nu_{\rm{post}}},p_{n}^{k})\|_{A_{n}}\leq\sqrt{1-\alpha_{\rm{post}}/\gamma}.
\end{align*}
\end{theorem}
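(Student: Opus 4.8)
The plan is to follow the classical Ruge--St\"uben argument, isolating the coarse grid correction operator
\[
  \pi_n := I_{n}-p_{n}^{k}\bigl((p_{n}^{k})^{H}A_{n}p_{n}^{k}\bigr)^{-1}(p_{n}^{k})^{H}A_{n},
\]
so that ${\rm TGM}(I,V_{n,\rm{post}}^{\nu_{\rm{post}}},p_{n}^{k})=V_{n,\rm{post}}^{\nu_{\rm{post}}}\pi_n$. First I would record the algebraic facts about $\pi_n$: since $A_n$ is positive definite and $p_n^k$ has full rank, $(p_n^k)^H A_n p_n^k$ is invertible, $\pi_n^2=\pi_n$, and $\pi_n$ is self-adjoint with respect to the $A_n$-inner product; hence $\pi_n$ is the $A_n$-orthogonal projector onto $\ker\bigl((p_n^k)^H A_n\bigr)$, which gives immediately $\|\pi_n x_n\|_{A_n}\le\|x_n\|_{A_n}$ and the Galerkin orthogonality $(p_n^k)^H A_n \pi_n x_n=0$ for every $x_n$. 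Also, from (a) applied to a generic vector one gets $\|V_{n,\rm{post}}x_n\|_{A_n}\le\|x_n\|_{A_n}$, i.e.\ $\|V_{n,\rm{post}}\|_{A_n}\le 1$.

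The core step is to convert the $2$-norm approximation bound (b) into a comparison between the $A_n$- and $A_n^2$-norms on the range of $\pi_n$. For $x_n\in\mathbb{C}^n$ put $z=\pi_n x_n$. Using the orthogonality $(p_n^k)^H A_n z=0$, for every $y\in\mathbb{C}^k$ we have $\|z\|_{A_n}^2=\langle A_n z,z\rangle=\langle A_n z, z-p_n^k y\rangle\le \|A_n z\|_2\,\|z-p_n^k y\|_2$; minimizing over $y$, invoking (b) with argument $z$, and noting $\|A_n z\|_2^2=\langle A_n^2 z,z\rangle=\|z\|_{A_n^2}^2$, yields $\|z\|_{A_n}^2\le \sqrt{\gamma}\,\|z\|_{A_n^2}\,\|z\|_{A_n}$, hence $\|z\|_{A_n}^2\le\gamma\,\|z\|_{A_n^2}^2$. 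Now apply the smoothing property (a) to $z=\pi_n x_n$:
\[
  \|V_{n,\rm{post}}\pi_n x_n\|_{A_n}^2\le \|\pi_n x_n\|_{A_n}^2-\alpha_{\rm{post}}\|\pi_n x_n\|_{A_n^2}^2
  \le\Bigl(1-\frac{\alpha_{\rm{post}}}{\gamma}\Bigr)\|\pi_n x_n\|_{A_n}^2
  \le\Bigl(1-\frac{\alpha_{\rm{post}}}{\gamma}\Bigr)\|x_n\|_{A_n}^2 .
\]
Since $k<n$ the projector $\pi_n$ is nonzero, so picking $x_n$ with $\pi_n x_n\neq 0$ in the two chained inequalities $\alpha_{\rm{post}}\|\pi_n x_n\|_{A_n^2}^2\le\|\pi_n x_n\|_{A_n}^2\le\gamma\|\pi_n x_n\|_{A_n^2}^2$ forces $\gamma\ge\alpha_{\rm{post}}$, so $1-\alpha_{\rm{post}}/\gamma\in[0,1)$ and the square root is meaningful. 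For $\nu_{\rm{post}}\ge 1$ one uses $\|V_{n,\rm{post}}\|_{A_n}\le 1$ to absorb the remaining $\nu_{\rm{post}}-1$ sweeps without enlarging the bound, and taking the supremum over $x_n$ gives $\|{\rm TGM}(I,V_{n,\rm{post}}^{\nu_{\rm{post}}},p_{n}^{k})\|_{A_n}\le\sqrt{1-\alpha_{\rm{post}}/\gamma}$.

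The only delicate point is the middle paragraph: the passage from the purely Euclidean estimate (b) to an inequality between $A_n$-weighted norms, which works precisely because the Galerkin coarse operator makes $\pi_n x_n$ orthogonal to $\mathrm{Range}(p_n^k)$ in the $A_n$-inner product. Everything else is bookkeeping with projections and the monotonicity $\|V_{n,\rm{post}}\|_{A_n}\le1$ extracted from the smoothing property; the relation $\gamma\ge\alpha_{\rm{post}}$ then comes for free from comparing the two norm estimates on the nontrivial subspace $\mathrm{Range}(\pi_n)$.
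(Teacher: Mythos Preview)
The paper does not give its own proof of this theorem: it is quoted verbatim as a known result from Ruge and St\"uben \cite[Theorem 5.2]{RStub} (see also \cite[Remark 2.2]{ADS}) and used as a black box for the subsequent analysis. Your argument is a correct and complete rendition of the classical Ruge--St\"uben proof: the identification of $\pi_n$ as the $A_n$-orthogonal projector, the use of Galerkin orthogonality together with Cauchy--Schwarz to turn the Euclidean approximation bound (b) into $\|\pi_n x_n\|_{A_n}^2\le\gamma\|\pi_n x_n\|_{A_n^2}^2$, and the combination with the smoothing property (a) are exactly the standard steps, and the extraction of $\gamma\ge\alpha_{\rm post}$ from a nonzero vector in $\mathrm{Range}(\pi_n)$ is clean.
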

Conditions $(a)$ and $(b)$ are usually called ``smoothing property'' and
``approximation property'', respectively.

Since $\alpha_{\rm{post}}$ and $\gamma$ are independent of $n$, if the assumptions of Theorem \ref{teoconv} are satisfied, then the resulting TGM is not only convergent but also optimal. In other words, the number of iterations in order to reach a given accuracy $\epsilon$ can be bounded from above by a constant independent of $n$ (possibly depending on the parameter $\epsilon$).

Of course, if the given method is complemented with a convergent pre-smoother, then by the same theorem we get a faster convergence. In fact, it is known that for square matrices $A$ and $B$ the spectra of $AB$ and $BA$
coincide.

Therefore ${\rm TGM}(V_{n,\rm{pre}}^{\nu_{\rm{pre}}},V_{n,\rm{post}}^{\nu_{\rm{post}}},p_{n}^{k})$
and ${\rm TGM}(I,V_{n,\rm{pre}}^{\nu_{\rm{pre}}}V_{n,\rm{post}}^{\nu_{\rm{post}}},p_{n}^{k})$ have the same ei\-gen\-values
so that
\begin{align*}
\|{\rm TGM}(V_{n,\rm{pre}}^{\nu_{\rm{pre}}},V_{n,\rm{post}}^{\nu_{\rm{post}}},p_{n}^{k})\|_{A_{n}}
& = 
\|{\rm TGM}(I,V_{n,\rm{pre}}^{\nu_{\rm{pre}}}V_{n,\rm{post}}^{\nu_{\rm{post}}},p_{n}^{k})\|_{A_{n}}\\
& \le \sqrt{1-\alpha_{\rm{post}}^{\rm{new}}/\gamma} \le
 \sqrt{1-\alpha_{\rm{post}}/\gamma},
\end{align*}
and hence the presence of a pre-smoother can only improve the convergence.


\section{Projecting operators for block-circulant matrices}\label{sect:project_circ}

The choice the prolongation and restriction operators in order to validate the approximation condition is crucial for TGM convergence and optimality. In the current section, we define the structure of projecting operators $p_n^k$ for the block-circulant matrix $\mathcal{A}_{n}(f)$ generated by a trigonometric polynomial $f:Q\rightarrow\mathcal{M}_d$.

On the one hand, $p_n^k$ projects the problem into a coarser one, ``cutting'' the matrix $\mathcal{A}_{n}(f)$, 
on the other hand the ``cut'' and projected matrix should maintain the same structure and the properties of $\mathcal{A}_{n}(f)$. Hence, as projector $p_n^k$ we choose the product between a matrix $\mathcal{A}_{n}(p)$ in the algebra, where $p$ is a trigonometric polynomial, and a cutting matrix $K_{n}^{T}\otimes I_d$ ($K_{n}$ defined in Table \ref{CT-projector}).

The equality in Table \ref{CT-projector} line 5 plays a basic role in maintaining the matrix algebra structure on subgrids, as we will see in Proposition \ref{fhat}.
\begin{table}
\centering
\begin{tabular}{cc}
    \hline
    	Object & Definition in the circulant algebra \\
		\hline
		$n$ & $2^t$ \\
		$k$ & $\frac{n}{2}=2^{t-1}$ \\
		$K_{n}$ & $\left[\begin{array}{cccccccc}
		1 & 0 & & & & &\\
		  &   & 1 & 0 & & & & \\
			&   &   &   & \ddots & \ddots & & \\
			&   &   &   &        &        & 1 & 0		
		\end{array}\right]_{k\times n}$ \\
		$I_{n,2}$ & $\left[I_{k} | I_{k}\right]_{k\times n}$ \\
		$K_{n}F_{n}$ & {$\frac{1}{\sqrt{2}}F_{k}I_{n,2}$}\\
		$p_n^k$ & {$\mathcal{A}_{n}(p)(K_{n}^T\otimes I_d)$}\\		
		\hline
\end{tabular}
\caption{Dimensions, cutting operators and relations in the case $d=1$, $N=dn$ and $K=dk$.}\label{CT-projector}
\end{table}
We note that, from the definition of $k$ in Table \ref{CT-projector}, $n$ must be even. We are left to determine the conditions to be satisfied
by $\mathcal{A}_{n}(p)$ (or better by its generating function $p$), in order to get a projector which is effective in terms of convergence.

\subsection{TGM conditions} 
Let $A_{N}=\mathcal{A}_{n}(f)$, $N = N(d,n) = dn$, with $f$ matrix-valued trigonometric polynomial, $f\geq0$, and let $p_n^k=\mathcal{A}_{n}(p)(K_{n}^T\otimes I_d)$ with $p$ matrix-valued trigonometric polynomial. Define $\Theta_0$ as the set of points $\theta$ such that $\lambda_j(f(\theta))=0$ for some $j$. Assume that, for $\theta\in\Theta_0$, $\lambda_j(f(\theta+\pi))\not=0$ for all $j=1,\dots,d$, which also implies that the set $\Theta_0$ is a finite set. Choose $p(\cdot)$ diagonalizable such that the following relations
\begin{align}
\exists \delta \mbox{ s.t. } |f(\theta)^{-\frac{1}{2}}p(\theta+\pi)^{H}|_1<\delta \quad &\forall \theta \in [0,2\pi)\backslash \Theta_0 \label{p2f1},\\
p(\theta)^{H}p(\theta)+p(\theta+\pi)^{H}p(\theta+\pi)>0 \quad &\forall \theta\in[0,2\pi), \label{p2f3}\\
p(\theta)p(\theta+\pi) = p(\theta+\pi)p(\theta) \quad &\forall \theta\in[0,2\pi) \label{eqn:commutativity}
\end{align}
are fulfilled.\\

\begin{remark} \label{remark:commutativity}
	Notice that condition (\ref{eqn:commutativity}) implies that there exists a unitary transform $U(\cdot)$ and a diagonal matrix-valued function $D_p(\cdot)$ such that $p(\theta) = U(\theta)D_p(\theta)U(\theta)^H$ and $p(\theta+\pi) = U(\theta)D_p(\theta+\pi)U(\theta)^H$. In particular, we have
	\begin{equation*}
		(p(\theta)^{H}p(\theta)+p(\theta+\pi)^{H}p(\theta+\pi))^{-1} = U(\theta)(|D_p(\theta)|^2+|D_p(\theta+\pi)|^2)^{-1}U(\theta)^H,
	\end{equation*}
	which ensures that $(p(\theta)^{H}p(\theta)+p(\theta+\pi)^{H}p(\theta+\pi))^{-1}$ commutes with $p(\theta)$, $p(\theta)^H$, $p(\theta+\pi)$ and $p(\theta+\pi)^H$.
\end{remark}

Before proving that the above conditions are sufficient to assure the TGM optimality, we consider
a crucial result both from a theoretical and a practical point of view.

\begin{proposition}\label{fhat} Let $f$ be a nonnegative definite matrix-valued function, $k$ defined as in Table \ref{CT-projector}, $K = dk$, $p_n^k=\mathcal{A}_{n}(p)(K_{n}^T\otimes I_d)\in\mathbb{C}^{N\times K}$, with $p$ trigonometric polynomial satisfying condition $(\ref{p2f1})$ for any
zero eigenvalue of $f$ and globally the condition $(\ref{p2f3})$. Then the matrix
$(p_n^k)^H\mathcal{A}_{n}(f)p_n^k\in\mathbb{C}^{K\times K}$
coincides with $\mathcal{A}_{k}(\hat{f})$ where $\hat{f}$ is nonnegative definite and
\begin{align}\label{f2t}
  \hat{f}(\theta)=\frac{1}{2}\left(p\left(\frac{\theta}{2}\right)^{H}f\left(\frac{\theta}{2}\right)p\left(\frac{\theta}{2}\right)+
	p\left(\frac{\theta}{2}+\pi\right)^{H}f\left(\frac{\theta}{2}+\pi\right)p\left(\frac{\theta}{2}+\pi\right)\right).
\end{align}
\end{proposition}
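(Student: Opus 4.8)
The plan is to reduce the whole statement to the algebra structure of the block-circulant family plus the grid-folding identity recorded in line~5 of Table~\ref{CT-projector}. Recall that $\{\mathcal{A}_{n}(g)\,:\,g\text{ matrix-valued trigonometric polynomial}\}$ is a $*$-algebra, simultaneously block-diagonalized by $F_{n}\otimes I_{d}$; in particular $\mathcal{A}_{n}(g_{1})\mathcal{A}_{n}(g_{2})=\mathcal{A}_{n}(g_{1}g_{2})$ and $\mathcal{A}_{n}(g)^{H}=\mathcal{A}_{n}(g^{H})$, where $g^{H}(\theta):=g(\theta)^{H}$. Applying this to $p_n^k=\mathcal{A}_{n}(p)(K_{n}^{T}\otimes I_{d})$ gives
\[
(p_n^k)^{H}\mathcal{A}_{n}(f)p_n^k=(K_{n}\otimes I_{d})\,\mathcal{A}_{n}(g)\,(K_{n}^{T}\otimes I_{d}),\qquad g:=p^{H}fp,
\]
and $g$ is again a matrix-valued trigonometric polynomial with $g(\theta)=p(\theta)^{H}f(\theta)p(\theta)\geq 0$ because $f\geq 0$.

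Next I would evaluate the compression explicitly. Writing $\mathcal{A}_{n}(g)=(F_{n}\otimes I_{d})\,\diag_{i\in\mathcal{I}_{n}}(g(\theta_i^{(n)}))\,(F_{n}^{H}\otimes I_{d})$ and substituting the identity of line~5 of Table~\ref{CT-projector} tensored by $I_{d}$, namely $(K_{n}\otimes I_{d})(F_{n}\otimes I_{d})=\tfrac{1}{\sqrt{2}}(F_{k}\otimes I_{d})(I_{n,2}\otimes I_{d})$, one gets
\[
(K_{n}\otimes I_{d})\mathcal{A}_{n}(g)(K_{n}^{T}\otimes I_{d})=\tfrac{1}{2}(F_{k}\otimes I_{d})\Big[(I_{n,2}\otimes I_{d})\,\diag_{i\in\mathcal{I}_{n}}(g(\theta_i^{(n)}))\,(I_{n,2}^{T}\otimes I_{d})\Big](F_{k}^{H}\otimes I_{d}).
\]
Since $I_{n,2}\otimes I_{d}=[\,I_{dk}\mid I_{dk}\,]$, the bracketed matrix equals the block-diagonal matrix $\diag_{i\in\mathcal{I}_{k}}\big(g(\theta_i^{(n)})+g(\theta_{i+k}^{(n)})\big)$; and $k=n/2$ yields $\theta_{i+k}^{(n)}=\theta_i^{(n)}+\pi$ together with $\theta_i^{(n)}=\theta_i^{(k)}/2$. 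Hence the bracket is $2\,\diag_{i\in\mathcal{I}_{k}}(\hat f(\theta_i^{(k)}))$ with $\hat f$ exactly the function in (\ref{f2t}) after unfolding $g=p^{H}fp$, and the right-hand side collapses to $\mathcal{A}_{k}(\hat f)$. The nonnegativity of $\hat f$ is immediate from $g\geq 0$, and one checks that the half-integer Fourier modes of $g(\theta/2)$ cancel in the symmetrized sum, so $\hat f$ is a genuine trigonometric polynomial and $\mathcal{A}_{k}(\hat f)$ is well defined.

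The argument is essentially bookkeeping; the only delicate point is the grid-folding step — correctly tensoring line~5 of Table~\ref{CT-projector} by $I_{d}$, verifying that compression by $I_{n,2}\otimes I_{d}$ sums the diagonal blocks indexed $i$ and $i+k$, and tracking the aliasing $\theta\mapsto\{\theta/2,\theta/2+\pi\}$ forced by $k=n/2$. I would also point out that conditions (\ref{p2f1}) and (\ref{p2f3}) are not needed for the identity itself; they are listed in the hypotheses because they are exactly what ensures that $\hat f$ inherits the structural properties of $f$ (finitely many zeros, positive definiteness elsewhere), which is what makes the construction usable recursively at the coarser grid.
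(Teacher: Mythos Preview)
Your proof is correct and follows essentially the same route as the paper: both use the $*$-algebra structure to collapse the product to $(K_{n}\otimes I_{d})\mathcal{A}_{n}(p^{H}fp)(K_{n}^{T}\otimes I_{d})$, then apply the grid-folding identity $K_{n}F_{n}=\tfrac{1}{\sqrt{2}}F_{k}I_{n,2}$ (tensored by $I_{d}$) to compress the block-diagonal, and finally read off the aliasing $\theta_{i}^{(n)}=\theta_{i}^{(k)}/2$, $\theta_{i+k}^{(n)}=\theta_{i}^{(k)}/2+\pi$. Your additional remarks --- that $\hat f$ is a genuine trigonometric polynomial because the half-integer modes cancel, and that conditions~(\ref{p2f1}) and~(\ref{p2f3}) are not actually used in the identity itself --- are correct observations that the paper leaves implicit.
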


\begin{proof} Using the notation in Table \ref{CT-projector}, we have that
\begin{align*}
  (p_n^k)^H\mathcal{A}_{n}(f)p_n^k&=(K_{n}\otimes I_d)\mathcal{A}_{n}(p^H)\mathcal{A}_{n}(f)\mathcal{A}_{n}(p)(K_{n}^T\otimes I_d)\\
	&=(K_{n}\otimes I_d)\mathcal{A}_{n}(p^Hfp)(K_{n}^T\otimes I_d)\\
	&=(K_{n}\otimes I_d)(F_{n}\otimes I_d)\diagi(p^Hfp(\theta_i^{(n)})(F_{n}^H\otimes I_d)(K_{n}^T\otimes I_d)\\
	&=\frac{1}{2}(F_{k}\otimes I_d)\diagik(p^Hfp(\theta_i^{(n)})+p^Hfp(\theta_{\tilde{i}}^{(n)}))(F_{k}^{H}\otimes I_d)\\
	&=\frac{1}{2}(F_{k}\otimes I_d)\diagik \left(p^{H}fp\left(\frac{\theta_{i}^{(k)}}{2}\right)+p^{H}fp\left(\frac{\theta_{i}^{(k)}}{2}+\pi\right)\right)(F_{k}^{H}\otimes I_{d})\\
&=\mathcal{A}_{k}(\hat{f}),
\end{align*}
where $\tilde{i}=i+k$; this is again a block-circulant matrix of size $K$. From the structure of $\hat{f}$ is clear that if $f$ is nonnegative definite also $\hat{f}$ is nonnegative definite.
\end{proof}

\section{Proof of convergence}\label{sect:proof_circ}
The current section is divided into two parts. In Subsection \ref{subsect:TGMconv} we prove the optimality of the two-grid method validating both the smoothing and the approximation conditions. In Subsection \ref{subsect:MGM} we provide a procedure to extend optimality to the V-cycle method focusing on the ill-conditioning of the coarse problem.

\subsection{TGM convergence}\label{subsect:TGMconv}
Concerning the validation of the smoothing property, the proof for block-circulant matrices is a slight modification of the one for multilevel scalar-circulant matrices found in \cite{ST}. We report it in full for completeness.

\begin{lemma}[\cite{ST}] \label{lm:smooth}
Let $A_{N}:=\mathcal{A}_{n}(f)$, with $f=[f_{\ell,g}]_{\ell,g=1}^{d}\in\mathcal{M}_{d}$ trigonometric polynomial, $f\geq0$, with 
$f_{j,j}$, $j=1,\ldots,d$, not identically zero, 
and let $V_{N}:=I_{N}-A_{N}/\|f\|_{\infty}$.  If we choose $\alpha_{\rm{post}}$ so
that $\alpha_{\rm{post}}\leq 1/\|f\|_{\infty}$, then relation $(a)$ in Theorem \ref{teoconv} holds true 
(and the best value of $\alpha_{\rm post}$ is $\alpha_{\rm post,best}=1/\|f\|_{\infty}$).
\end{lemma}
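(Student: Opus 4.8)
The plan is to reduce condition (a) of Theorem \ref{teoconv} to a single one‑variable spectral inequality, exploiting the unitary diagonalization of block‑circulant matrices. First I would collect the spectral facts about $A_N:=\mathcal{A}_{n}(f)$. Since $f$ is a trigonometric polynomial with $f(\theta)\geq 0$ for every $\theta$, the matrix $A_N$ is Hermitian positive semidefinite, and by the identity $\mathcal{A}_{n}(f)=(F_n\otimes I_d)\,\operatorname{diag}(f(\theta_i^{(n)}))\,(F_n^H\otimes I_d)$ recalled in Section \ref{sect:notation} its eigenvalues are exactly the eigenvalues of the $d\times d$ blocks $f(\theta_i^{(n)})$, $i\in\mathcal{I}_n$. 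In particular the spectrum of $A_N$ is contained in $[0,\|f\|_\infty]$, because $\lambda_{\max}(f(\theta))\leq\|f\|_\infty$ for each $\theta$. The hypothesis that some diagonal entry $f_{j,j}$ is not identically zero only serves to guarantee $\|f\|_\infty>0$, so that $V_N=I_N-A_N/\|f\|_\infty$ is well defined.

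Next I would rewrite (a) as a matrix inequality. Put $c=\|f\|_\infty$. Since $V_N$ is a real polynomial in the Hermitian matrix $A_N$, it is Hermitian and commutes with $A_N$, whence $\|V_N x\|_{A_N}^2=x^H A_N V_N^2 x$; recalling also that $\|x\|_{A_N^2}^2=x^H A_N^2 x$, condition (a) with constant $\alpha_{\rm post}$ is equivalent, after substituting $V_N^2=I_N-\tfrac{2}{c}A_N+\tfrac{1}{c^2}A_N^2$ and collecting terms, to the requirement $x^H A_N^2\bigl(\tfrac{2}{c}I_N-\alpha_{\rm post}I_N-\tfrac{1}{c^2}A_N\bigr)x\geq 0$ for all $x$, i.e. to the matrix inequality $A_N^2\bigl((\tfrac{2}{c}-\alpha_{\rm post})I_N-\tfrac{1}{c^2}A_N\bigr)\geq 0$ (the two factors commute, so their product is Hermitian).

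Because $A_N$ and $A_N^2$ are simultaneously unitarily diagonalizable with nonnegative eigenvalues, this inequality holds iff $\mu^2\bigl((\tfrac{2}{c}-\alpha_{\rm post})-\tfrac{\mu}{c^2}\bigr)\geq 0$ for every eigenvalue $\mu$ of $A_N$. For $\mu=0$ this is automatic; for $\mu\in(0,c]$ it reads $\alpha_{\rm post}\leq\tfrac{2}{c}-\tfrac{\mu}{c^2}$, and the right‑hand side, decreasing in $\mu$, is at least $\tfrac{2}{c}-\tfrac{c}{c^2}=\tfrac{1}{c}$. Hence any $\alpha_{\rm post}\leq 1/\|f\|_\infty$ validates (a); and since the eigenvalues of $A_N$ come arbitrarily close to $\|f\|_\infty$ as $n$ grows (the grid points $\theta_i^{(n)}$ fill $[0,2\pi)$ densely and $f$ is continuous), no larger constant is admissible uniformly in $n$, which gives $\alpha_{\rm post,best}=1/\|f\|_\infty$.

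The whole computation is elementary, and the only point that merits a little care — the natural main obstacle — is the spectral localization $\sigma(A_N)\subseteq[0,\|f\|_\infty]$: it rests on the exact block‑circulant diagonalization from Section \ref{sect:notation} together with the bound $\lambda_{\max}(f(\theta))\leq\|f\|_\infty$, valid because each $f(\theta)$ is Hermitian positive semidefinite. Once this is in place, the remainder is just the single scalar estimate above, applied simultaneously to all eigenvalues of $A_N$.
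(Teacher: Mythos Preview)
Your argument is correct and follows essentially the same route as the paper: both rewrite condition (a) as a polynomial matrix inequality in $A_N$, then reduce it to the scalar inequality $\lambda^2/\|f\|_\infty^2+(\alpha_{\rm post}-2/\|f\|_\infty)\lambda\le 0$ for every eigenvalue $\lambda\in(0,\|f\|_\infty]$ of $A_N$, from which $\alpha_{\rm post}\le 1/\|f\|_\infty$ follows. Your treatment is in fact slightly more careful than the paper's in two minor respects: you handle the possibility $\mu=0$ explicitly rather than tacitly assuming $A_N$ is positive definite, and you justify the optimality claim $\alpha_{\rm post,best}=1/\|f\|_\infty$ via the density of the grid points, whereas the paper merely asserts it.
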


\begin{proof}
By setting $V_N = I_N-A_N/\|f\|_{\infty}$, the relation $(a)$ in Theorem \ref{teoconv} is equivalent to writing
\begin{align*}
  \left(I_N-\frac{A_N}{\|f\|_{\infty}}\right)^{H}A_{N}\left(I_N-\frac{A_N}{\|f\|_{\infty}}\right)\leq A_{N}-\alpha_{\rm{post}}A_{N}^2,
\end{align*}
with $\alpha_{\rm{post}}>0$ independent of $n$, that is
\begin{align}\label{condQ}
  \left(I_N-\frac{A_N}{\|f\|_{\infty}}\right)^{2}\leq I_{N}-\alpha_{\rm{post}}A_{N}
\end{align}
By making some algebraic manipulations, the quoted relation can be rewritten as
\begin{align*}
  \frac{A_N^{2}}{\|f\|_{\infty}^{2}} +\left(\alpha_{\rm{post}}-\frac{2}{\|f\|_{\infty}}\right)A_{N}\leq0	
\end{align*}
where the latter is equivalent to requiring that the inequality
\begin{align*}
&\frac{\lambda^{2}}{\|f\|_{\infty}^{2}} +\left(\alpha_{\rm{post}}-\frac{2}{\|f\|_{\infty}}\right)\lambda\leq0
\end{align*}
hold for any eigenvalue $\lambda$ of the Hermitian (positive definite) matrix $A_N$ with $\alpha_{\rm{post}}>0$ independent of $n$.
Since $\|A_{N}\|_{\infty}\leq\|f\|_{\infty}$, and $A_{N}$ is positive definite (the eigenvalues are real and positive), the eigenvalues of $A_{N}$ lie in the range $(0,\|f\|_{\infty}]$.
Therefore a necessary and sufficient condition such that (\ref{condQ}) holds for any $n$ is that $\alpha_{\rm{post}}\leq\frac{1}{\|f\|_{\infty}}$.
\end{proof}
The result of Lemma \ref{lm:smooth} can be easily generalized when
considering both pre-smoothing and post-smoothing as in \cite{AD}.

The following result shows that TGM conditions \eqref{p2f1}, \eqref{p2f3} and \eqref{eqn:commutativity} are sufficient in order to satisfy the approximation property.

\begin{theorem} \label{th:tgmopt}
Let $A_{N}:=\mathcal{A}_{n}(f)$, with $f(\theta)\in\mathcal{M}_{d}$ trigonometric polynomial, $f\geq0$,
and let $p_n^k=\mathcal{A}_{n}(p)(K_{n}^T\otimes I_d)$ be the
projecting operator defined as in Table \ref{CT-projector} and
with $p(\theta)\in\mathcal{M}_{d}$ diagonalizable trigonometric polynomial satisfying conditions $(\ref{p2f1})$, $(\ref{p2f3})$ and (\ref{eqn:commutativity}). Then, there exists a positive value $\gamma$
independent of $n$ such that inequality $(b)$ in Theorem \ref{teoconv} is satisfied.
\end{theorem}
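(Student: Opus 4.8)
The plan is to move everything into the Fourier domain, where $\mathcal{A}_n(f)$ and $\mathcal{A}_n(p)$ become block-diagonal, reduce the approximation property $(b)$ of Theorem~\ref{teoconv} to a single inequality in $\mathbb{C}^d$ that is uniform in $\theta$ (hence in $n$), and then prove that inequality by choosing $y$ to be the least-squares coarse vector and exploiting the simultaneous diagonalization granted by Remark~\ref{remark:commutativity}.

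\emph{Fourier reduction.} First I would apply the unitary $F_n^H\otimes I_d$ inside the Euclidean norm in $(b)$. Using $\mathcal{A}_n(p)=(F_n\otimes I_d)\diag_{i\in\mathcal{I}_n}(p(\theta_i^{(n)}))(F_n^H\otimes I_d)$ together with the identity $K_nF_n=\tfrac1{\sqrt2}F_kI_{n,2}$ from Table~\ref{CT-projector}, one gets that $(F_n^H\otimes I_d)p_n^k$ maps $y\in\mathbb{C}^K$ to the vector whose $i$-th $d$-block is $\tfrac1{\sqrt2}p(\theta_i^{(n)})\hat y_{i\bmod k}$, where $\hat y=(F_k^H\otimes I_d)y$. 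Since $\theta_{j+k}^{(n)}=\theta_j^{(n)}+\pi$ and $\theta_j^{(n)}=\tfrac12\theta_j^{(k)}$, the minimization over $y$ decouples over the $k$ frequency pairs $\{\theta_j^{(n)},\theta_j^{(n)}+\pi\}$; writing $a,b\in\mathbb{C}^d$ for the matching $d$-blocks of $(F_n^H\otimes I_d)x_N$ and recalling $\|x_N\|_{A_N}^2=\sum_{i\in\mathcal{I}_n}(\cdot)^Hf(\theta_i^{(n)})(\cdot)$, property $(b)$ follows once I show: there is $\gamma>0$ independent of $\theta$ with
\begin{equation*}
\min_{z\in\mathbb{C}^d}\Big(\|a-p(\theta)z\|_2^2+\|b-p(\theta+\pi)z\|_2^2\Big)\le\gamma\big(a^Hf(\theta)a+b^Hf(\theta+\pi)b\big)
\end{equation*}
for all $\theta\in[0,2\pi)$ and all $a,b\in\mathbb{C}^d$.

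\emph{The pointwise estimate.} Set $G(\theta)=p(\theta)^Hp(\theta)+p(\theta+\pi)^Hp(\theta+\pi)$, which is positive definite by \eqref{p2f3}, so by continuity and $2\pi$-periodicity $\lambda_{\min}(G(\theta))\ge c_G>0$ uniformly. I would take the least-squares choice $z=G(\theta)^{-1}\big(p(\theta)^Ha+p(\theta+\pi)^Hb\big)$. By Remark~\ref{remark:commutativity} write $p(\theta)=UD_1U^H$, $p(\theta+\pi)=UD_2U^H$ with $U$ unitary and $D_1,D_2$ diagonal, and put $\tilde a=U^Ha$, $\tilde b=U^Hb$; a direct coordinatewise computation then collapses the residual to
\begin{equation*}
\|a-p(\theta)z\|_2^2+\|b-p(\theta+\pi)z\|_2^2=\sum_{\ell=1}^{d}\frac{\big|(D_2)_{\ell\ell}\tilde a_\ell-(D_1)_{\ell\ell}\tilde b_\ell\big|^2}{|(D_1)_{\ell\ell}|^2+|(D_2)_{\ell\ell}|^2}\le\frac1{c_G}\big(2\,\tilde a^H|D_2|^2\tilde a+2\,\tilde b^H|D_1|^2\tilde b\big).
\end{equation*}
Now $\tilde a^H|D_2|^2\tilde a=a^Hp(\theta+\pi)^Hp(\theta+\pi)a$ and $\tilde b^H|D_1|^2\tilde b=b^Hp(\theta)^Hp(\theta)b$, and \eqref{p2f1} (with the dimensional constant from the equivalence of norms on $\mathcal{M}_d$ absorbed into a constant $c$) gives $p(\theta+\pi)^Hp(\theta+\pi)\le c\,\delta^2 f(\theta)$ for $\theta\notin\Theta_0$ and, substituting $\theta\mapsto\theta+\pi$, $p(\theta)^Hp(\theta)\le c\,\delta^2 f(\theta+\pi)$ for $\theta+\pi\notin\Theta_0$; since $\Theta_0$ is finite and $f\ge0$, both matrix inequalities extend to all $\theta$ by continuity. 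Plugging in yields the pointwise estimate with $\gamma=2c\delta^2/c_G$, independent of $\theta$ and $n$.

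\emph{Main obstacle.} The delicate point — and exactly why \eqref{eqn:commutativity} is imposed — is the pointwise step. For the least-squares $z$, the residual is a quadratic form in $(a,b)$ whose cross block is $-p(\theta)G(\theta)^{-1}p(\theta+\pi)^H$, and without a common eigenbasis for $p(\theta)$ and $p(\theta+\pi)$ there is no way to dominate this cross term by $a^Hf(\theta)a+b^Hf(\theta+\pi)b$ using \eqref{p2f1} and \eqref{p2f3} alone; commutativity is precisely what reduces the residual to the scalar-like diagonal sum above, after which the estimate is immediate. By comparison, the Fourier bookkeeping of the first step and the continuity argument extending \eqref{p2f1} across $\Theta_0$ are routine, paralleling the scalar case; one only has to be careful that all the constants ($c_G$ from \eqref{p2f3}, $\delta$ from \eqref{p2f1}, and the norm-equivalence constant on $\mathcal{M}_d$) are genuinely $n$-independent, which they are because $d$ is fixed and $f,p$ are fixed trigonometric polynomials.
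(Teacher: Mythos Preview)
Your argument is correct and follows the same overall strategy as the paper: choose the least-squares $y$, pass to the Fourier side to decouple into $2d\times 2d$ blocks indexed by the frequency pairs $\{\theta,\theta+\pi\}$, and then use the commutativity hypothesis \eqref{eqn:commutativity} together with \eqref{p2f1}--\eqref{p2f3} to bound the resulting pointwise quadratic form uniformly in~$\theta$. The only real difference is in how the final pointwise estimate is carried out. The paper conjugates the $2d\times 2d$ projector block by $\diag(f(\theta)^{-1/2},f(\theta+\pi)^{-1/2})$ (Sylvester inertia law) to obtain a matrix $R(\theta)$ and then bounds each $d\times d$ block of $R(\theta)$ directly in the trace norm via \eqref{p2f1}. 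You instead first exploit Remark~\ref{remark:commutativity} to simultaneously diagonalize $p(\theta)$ and $p(\theta+\pi)$, collapse the residual to a scalar sum, and only then invoke \eqref{p2f1} in the form of the matrix inequality $p(\theta+\pi)^Hp(\theta+\pi)\le\delta^2 f(\theta)$ (note that $\|\cdot\|_\infty\le|\cdot|_1$, so your constant $c$ can in fact be taken equal to $1$). Your route is slightly more elementary in that it avoids the Sylvester step and makes the scalar structure explicit; the paper's route is a bit more compact and keeps the argument at the matrix level throughout. Both hinge on exactly the same three hypotheses in the same way, and your identification of \eqref{eqn:commutativity} as the device that kills the cross term is precisely the point the paper is making.
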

\begin{proof}
In order to prove that there exists $\gamma>0$ independent of $n$ such that for any $x_{N}\in\mathbb{C}^{N}$
\begin{align}\label{condW}
  \min_{y\in\mathbb{C}^{K}}\|x_{N}-p_n^ky\|_{2}^{2}\leq \gamma\|x_{N}\|_{A_{N}}^{2},
\end{align}
we choose a special instance of $y$ in such a way that the previous inequality is
reduced to a matrix inequality in the sense of the partial ordering of the real space of
the Hermitian matrices. For any $x_N\in\mathbb{C}^N$, let $\overline{y}\equiv\overline{y}(x_{N})\in\mathbb{C}^{K}$ be defined as
\begin{align*}
  \overline{y}=[(p_n^k)^{H}p_n^k]^{-1}(p_n^k)^{H}x_{N}.
\end{align*}
We observe that $(p_n^k)^{H}p_n^k$ is invertible, indeed, using the same arguments of Proposition \ref{fhat} with $f=I_{d}$, we have that $(p_n^k)^{H}p_n^k=\mathcal{A}_{k}(\hat{p})$ with $\hat{p}(\theta)=\frac{1}{2}\left(p\left(\frac{\theta}{2}\right)^{H}p\left(\frac{\theta}{2}\right)+
	p\left(\frac{\theta}{2}+\pi\right)^{H}p\left(\frac{\theta}{2}+\pi\right)\right)$ and condition (\ref{p2f3}) ensure that $\hat{p}>0$, that is $\mathcal{A}_{k}(\hat{p})$ is positive definite.\\
Therefore, (\ref{condW}) is implied by
\begin{align*}
  \|x_{N}-p_n^k\overline{y}\|_{2}^{2}\leq \gamma\|x_{N}\|_{A_{N}}^{2},
\end{align*}
where the latter is equivalent to the matrix inequality
\begin{align*}
  W_{N}(p)^{H}W_{N}(p)\leq \gamma A_{N}.
\end{align*}
with $W_{N}(p)=I_{N}-p_n^k[(p_n^k)^{H}p_n^k]^{-1}(p_n^k)^{H}$. Since, by construction, $W_N(p)$ is a Hermitian unitary projector, it holds that $W_N(p)^{H}W_N(p)= W_{N}(p)^{2} = W_N(p)$. As a consequence, the preceding matrix inequality can be rewritten as
\begin{align}\label{rela}
  W_{N}(p)\leq \gamma A_{N}.
\end{align}

Now, using the notation in Table \ref{CT-projector}, the matrix $p_n^k=\mathcal{A}_{n}(p)(K_{n}^{T}\otimes I_{d})$ can be expressed according to
\begin{align*}
  (p_n^k)^{H}=\frac{1}{\sqrt{2}}(F_{k}\otimes I_{d})(I_{n,2}\otimes I_d)\diagi(p(\theta_i^{(n)})^{H})(F_{n}^{H}\otimes I_d),
\end{align*}
and the matrix $(F_{n}^{H}\otimes I_d)W_{N}(p)(F_{n}\otimes I_{d})$ becomes
\begin{align*}
(F_{n}^{H}\otimes I_d)W_{N}(p)(F_{n}\otimes I_{d})&=I_{N}-
\diagi(p(\theta_i^{(n)}))(I_{n,2}^{T}\otimes I_d)\\	
	&\left[\diagik(p(\theta_{i}^{(n)})^{H}p(\theta_{i}^{(n)})+p(\theta_{\tilde{i}}^{(n)})^{H}p(\theta_{\tilde{i}}^{(n)}))\right]^{-1}\\
	&(I_{n,2}\otimes I_d)\diagi(p(\theta_i^{(n)})^{H})
\end{align*}
where $\tilde{i}=i+k$. Now, it is clear that there exists a suitable permutation by rows and columns of $(F_{n}^{H}\otimes I_d)W_{N}(p)(F_{n}\otimes I_{d})$ such that we can obtain a $2d\times 2d$ block-diagonal matrix of the form
\begin{align*}
I_{N}-\diagik\left[\begin{array}{c} 
p(\theta_{i}^{(n)}) \\ p(\theta_{\tilde{i}}^{(n)})
\end{array}\right]
\left[\begin{array}{c} 
(p(\theta_{i}^{(n)})^{H}p(\theta_{i}^{(n)})+p(\theta_{\tilde{i}}^{(n)})^{H}p(\theta_{\tilde{i}}^{(n)}))^{-1}
\end{array}\right]
\left[\begin{array}{cc} 
p(\theta_{i}^{(n)})^{H} & p(\theta_{\tilde{i}}^{(n)})^{H}
\end{array}\right].
\end{align*}
Therefore, by considering the same permutation by rows and columns of $(F_{n}^{H}\otimes I_d)A_{N}(F_{n}\otimes I_{d})=\Delta_{N}(f)$, condition (\ref{rela}) is equivalent to requiring that there exists $\gamma>0$ independent of $n$ such that, $\forall j=0,\ldots,k-1$
\begin{align*}
&I_{2d}-\left[\begin{array}{c} 
p(\theta_{i}^{(n)}) \\ p(\theta_{\tilde{i}}^{(n)})
\end{array}\right]
\left[\begin{array}{c} 
(p(\theta_{i}^{(n)})^{H}p(\theta_{i}^{(n)})+p(\theta_{\tilde{i}}^{(n)})^{H}p(\theta_{\tilde{i}}^{(n)}))^{-1}
\end{array}\right]
\left[\begin{array}{cc} 
p(\theta_{i}^{(n)})^{H} & p(\theta_{\tilde{i}}^{(n)})^{H}
\end{array}\right] \\
&\leq \gamma
\left[\begin{array}{cc} 
f(\theta_{i}^{(n)}) & \\
& f(\theta_{\tilde{i}}^{(n)})
\end{array}\right].
\end{align*}
We define the set $H=\{\eta|\eta\in\{\theta,(\theta+\pi)\mod 2\pi\},\mbox{ where }\theta\in\Theta_0\}$ . Due of the continuity of $p$ and $f$ it is clear that the preceding set of inequalities can be reduced to requiring that a unique inequality of the form
\begin{align*}
I_{2d}-\left[\begin{array}{c} 
p(\theta) \\ p(\theta+\pi)
\end{array}\right]
\left[\begin{array}{c} 
(p(\theta)^{H}p(\theta)+p(\theta+\pi)^{H}p(\theta+\pi))^{-1}
\end{array}\right]
\left[\begin{array}{cc} 
p(\theta)^{H} & p(\theta+\pi)^{H}
\end{array}\right]\leq \gamma
\left[\begin{array}{cc} 
f(\theta) & \\
& f(\theta+\pi)
\end{array}\right],
\end{align*}
holds for all $\theta\in[0,2\pi)\backslash H$. Let us define $q(\theta)=(p(\theta)^{H}p(\theta)+p(\theta+\pi)^{H}p(\theta+\pi))^{-1}$. By simple computations, using condition (\ref{eqn:commutativity}) and Remark \ref{remark:commutativity} the previous inequality becomes 
\begin{align}\label{eqn:after_commutativity}
\left[\begin{array}{c} 
q(\theta)
\end{array}\right]
\left[\begin{array}{cc} 
p(\theta+\pi)^{H}p(\theta+\pi) & p(\theta)p(\theta+\pi)^{H} \\
p(\theta+\pi)p(\theta)^{H} & p(\theta)^{H}p(\theta)
\end{array}\right]\leq \gamma
\left[\begin{array}{cc} 
f(\theta) & \\
& f(\theta+\pi)
\end{array}\right].
\end{align}

Let us define the matrix-valued function
\begin{align*}
R(\theta)=\left[\begin{array}{cc} 
f(\theta) & \\
& f(\theta+\pi)
\end{array}\right]^{-\frac{1}{2}}
\left[\begin{array}{c} 
q(\theta)
\end{array}\right]
\left[\begin{array}{cc} 
p(\theta+\pi)^{H}p(\theta+\pi) & p(\theta)p(\theta+\pi)^{H} \\
p(\theta+\pi)p(\theta)^{H} & p(\theta)^{H}p(\theta)
\end{array}\right]
\left[\begin{array}{cc} 
f(\theta) & \\
& f(\theta+\pi)
\end{array}\right]^{-\frac{1}{2}}.
\end{align*}
By the Sylvester inertia law \cite{GV}, relation \eqref{eqn:after_commutativity} is satisfied if 
\begin{align}\label{eqn:inequality_on_R}
R(\theta)\leq \gamma I_{2d} 
\end{align}
is satisfied, which is equivalent to show that the matrix-valued function $R(\theta)$ is uniformly bounded in the spectral norm. Using again the commutativity hypothesis \eqref{eqn:commutativity}, we can write $R(\theta)$ as
\begin{align*}
R(\theta)=
\left[\begin{array}{cc} 
f^{-\frac{1}{2}}(\theta)p(\theta+\pi)^{H}q(\theta)p(\theta+\pi)f^{-\frac{1}{2}}(\theta) & f^{-\frac{1}{2}}(\theta)p(\theta+\pi)^{H}q(\theta)p(\theta)f^{-\frac{1}{2}}(\theta+\pi) \\
f^{-\frac{1}{2}}(\theta+\pi)p(\theta)^{H}q(\theta)p(\theta+\pi)f^{-\frac{1}{2}}(\theta) & f^{-\frac{1}{2}}(\theta+\pi)p(\theta)^{H}q(\theta)p(\theta)f^{-\frac{1}{2}}(\theta+\pi)
\end{array}\right].
\end{align*}

We prove that $R(\theta)$ is uniformly bounded in the spectral norm by proving that all its components are uniformly bounded in the norm $|\cdot|_1$. For all $\theta\in[0,2\pi)\backslash H$, we can write
\begin{align*}
\left|R_{1,1}(\theta)\right|_1=\left|f^{-\frac{1}{2}}(\theta)p(\theta+\pi)^{H}q(\theta)p(\theta+\pi)f^{-\frac{1}{2}}(\theta)\right|_1\le\left|f^{-\frac{1}{2}}(\theta)p(\theta+\pi)^{H}\right|_1\left|q(\theta)\right|_1\left|p(\theta+\pi)f^{-\frac{1}{2}}(\theta)\right|_1.
\end{align*}

Noticing that
$$
\left|p(\theta+\pi)f^{-\frac{1}{2}}(\theta)\right|_1=\left|\left(f^{-\frac{1}{2}}(\theta)p(\theta+\pi)^{H}\right)^H\right|_1=\left|f^{-\frac{1}{2}}(\theta)p(\theta+\pi)^{H}\right|_1
$$
and using conditions \eqref{p2f1} and \eqref{p2f3}, we can find $\delta$ such that $|R_{1,1}(\theta)|_1<\delta$ for all $\theta\in[0,2\pi)\backslash H$.

The uniform boundedness of the other components of $R(\theta)$ can be proven in an analogous way, recalling that if $\theta$ belongs to $\Theta_0$, then $f$ is nonsingular in $\theta+\pi$. This implies that the matrix-valued function $R(\theta)$ is uniformly bounded in the 1-norm. Since the matrix dimension of $R(\theta)$ is fixed for all $\theta$ and equal to $2d$, the equivalence between the 1-norm and the spectral norm lets us conclude the proof.
\end{proof}

\subsection{MGM convergence and optimality}\label{subsect:MGM}
In the current subsection we consider a problem of Laplacian type i.e.  $\mathcal{A}_n(f)\ge 0$ generated by a trigonometric polynomial $f:Q\rightarrow\mathcal{M}_d$, $f\geq0$, that has a nonnegative  minimal eigenvalue function $\lambda_{\rm min}(f)$ with a unique zero in the origin of order two.

In order to select a projector $p_n^k$ that ensures the convergence and optimality of the multigrid procedure applied to $\mathcal{A}_n(f)$, we study the quantity

\begin{equation*}
\kappa(\hat{f}_j)=\frac{\|\lambda_{\rm max}(\hat{f}_j)\|_{\infty}}{\left.\lambda''_{\rm min}(\hat{f}_j)\right|_{0}},
\end{equation*} 
which gives an estimate of the ill-conditioning of the coarse problem at level $j$. Indeed the conditioning of the matrix $\mathcal{A}_{n_j}(\hat{f}_j)$ depends on $\|\lambda_{\rm max}(\hat{f}_j)\|_{\infty}$ and $\left.\lambda''_{\rm min}(\hat{f}_j)\right|_{0}$,  which measure the magnitude of the maximum eigenvalue function $\lambda_{\rm max}(\hat{f}_j)$ and how flat the minimal eigenvalue function is around the origin, respectively. 

We select a class of projectors $p_{n(j)}^{k(j)}(z)=\mathcal{A}_{n(j)}(p_z)(K^T_{n(j)}\otimes I_d)$ according to the theoretical analysis of Section \ref{sect:project_circ} with $p_z(\cdot)$ of form

\begin{equation}\label{projector_pz}
p_z(\theta)=(1+\cos \theta)\left(I_d+\frac{z-1}{d}ee^T\right), \quad z > 0
\end{equation} 
where $e$ is the vector of all ones of length $d$.
Note that

\begin{equation*}
p_z(\theta)=F_d\begin{bmatrix}
z+z\cos\theta& & &  \\
             &1+\cos\theta & &  \\
        & & \ddots&  \\
        & & &1+\cos\theta  \\
\end{bmatrix} F_d^H
\end{equation*} 
hence the eigenvalue functions of $p_z(\cdot)$ have a zero at $\pi$ of order two for all $z > 0$, which is the desirable property for Condition (\ref{p2f1}). Moreover, the matrix-valued function $p_z(\cdot)$ trivially satisfies Condition (\ref{eqn:commutativity}), since its eigenvector functions are constant.

In the following section we will study the conditioning $\kappa(\hat{f}_{z,j})$, where  $\hat{f}_{z,j}$ is the generating function at level $j$ obtained using $p_z(\cdot)$. In particular we will look for a $z>0$ such that 

\begin{equation}\label{formula:limit_lambda_j}
\lim_{j\to \infty}\left.\lambda''_{\rm min}(\hat{f}_{z,j})\right|_{0}>0
\end{equation}  
that guarantees that the behaviour of the minimal eigenvalue function around the origin remains unchanged at the coarser levels. 

\section{Extension to 2D case}\label{sect:2D}
In the following we show how it is possible extend the MGM convergence results in the multidimensional setting.
Let ${\bf n}:=(n_1,\ldots,n_\Bbbk)$ be a multi-index
in $\mathbb N^\Bbbk$ and set $N(d,\textbf{n}):=d\prod_{i=1}^\Bbbk n_i$. In particular we show how to generalize projector $p_n^k$ for the $\Bbbk-$level block-circulant matrix $A_N=C_{\textbf{n} }(f)$ of dimension $N(d,\textbf{n})$ generated by a multilevel block-circulant trigonometric polynomial $f$. For a complete discussion on the multi-index notation, see \cite{GS}.

\begin{Definition} \label{ multivariate_and_matrix}
A matrix-valued multivariate trigonometric polynomial is a function $f:Q^\Bbbk\to \C^{d \times d}$, $\Bbbk,d>1$,  written as a finite linear combination of the Fourier frequencies $\{{\rm e}^{-{\imath}\left\langle {\bf j},\boldsymbol{\theta}\right\rangle}\, : \textbf{j}\in \Z^\Bbbk\}$ or, equivalently, for all $l,m=1,\dots, s$, its $(l,m)$th component $f_{lm}:Q^\Bbbk\to \C$ is a scalar multivariate trigonometric polynomial of degree $\textbf{r}_{lm}$. 
The degree of $f$ is a positive $\Bbbk$-index $\boldsymbol{{\rm r}}$ defined as
\[{\boldsymbol{{\rm r}}}=\underset{m=1,\dots, d}{\max_{l=1,\dots, d}} \textbf{r}_{lm}. \]
 Thus $f$ can be written as the Fourier sum
 \begin{equation} \label{f_multi}
 f(\boldsymbol{\theta})=\sum_{\textbf{j}=-\boldsymbol{{\rm r}}}^{ \boldsymbol{{\rm r}}} a_\textbf{j} {\rm e}^{\left\langle {\bf j},\boldsymbol{\theta}\right\rangle}, 
 \end{equation}
 where the Fourier coefficients of $f$ are given by 
 \begin{align}
  a_{\bf j}:=\frac1{(2\pi)^\Bbbk}\int_{Q^\Bbbk}f(\boldsymbol{\theta}){\rm e}^{-{\imath}\left\langle {\bf j},\boldsymbol{\theta}\right\rangle}\ {\rm d}\boldsymbol{\theta}\in\C^{d\times d},
  \qquad {\bf j}=(j_1,\ldots,j_\Bbbk)\in\mathbb Z^\Bbbk,\ \ \ 
\end{align}
\end{Definition}
where $\left\langle { \bf j},\boldsymbol{\theta}\right\rangle=\sum_{t=1}^\Bbbk j_t\theta_t$ and the integrals in \eqref{fhat} are computed componentwise. 
If $f$ is defined ad in (\ref{f_multi}), then the ${\bf n}$th multilevel block-circulant matrix associated with $f$ is the matrix of order $N(d,\textbf{n})$ given by
\begin{equation}\label{Circ_multi_block}
C_{\bf n}(f) =\sum_{\bf j=-(\bf n-\bf e)}^{\bf n-\bf e} Z_{n_1}^{j_1} \otimes \cdots\otimes Z_{n_\Bbbk}^{j_\Bbbk}\otimes a_{\bf j},
\end{equation}
where $\textbf{e}=(1,\ldots,1)\in\mathbb{N}^\Bbbk, \,\textbf{j}=(j_1,\ldots,j_\Bbbk)\in\mathbb{N}^\Bbbk$ and $ Z^{j_{\xi}}_{n_{\xi}}$  is the $n_{\xi} \times n_{\xi}$ matrix whose $(i,h)$th entry equals 1 if $(i-h)$ mod $n_{\xi}=j_{\xi}$ and $0$ otherwise.

Analogously to the scalar case, we want to construct the projectors from an arbitrary multilevel block circulant matrix $C_{\textbf{n}}(p)$, with $p$ multivariate matrix-valued trigonometric polynomial of degree $\textbf{c}$  independent of $\textbf{n}$.
 Hence we define the projector 
 \begin{equation}
  p_{\textbf{n}}^{\textbf{k}}=C_{\textbf{n}}(p) \left(K^T_{\textbf{n}}\otimes I_d\right),
 \end{equation} 
  where $K_{\textbf{n}}$ the $N(1,\textbf{n}) \times \frac{N(1,\textbf{n})}{2^\Bbbk}$ matrix defined by $K_{\textbf{n}}=K_{n_1} \otimes K_{n_2} \otimes \dots \otimes K_{n_\Bbbk}$ and $C_{\textbf{n}}(p)$ is a multilevel block-circulant matrix generated by $p$.


In the next section we will see how an analogous procedure can be applied to multilevel Teoplitz structures. In particular, in Subsections \ref{subsection:FEM2D} and \ref{subsection:staggeredDG} we apply the V-cycle procedure to bilevel Toeplitz matrices ($\Bbbk = 2$), assuming $\textbf{n}=(n,n)$.

\section{Numerical Examples}\label{sect:experiments}

In the current section we give numerical evidence of the results proven in Section \ref{sect:proof_circ}. We will deal with general Toeplitz matrices generated by a matrix-valued trigonometric polynomial, instead of block-circulant matrices. We expect that the theoretical results of Section \ref{sect:proof_circ} still hold, since the analysis for Toeplitz matrices is an algebraic generalization of the Local Fourier Analysis of multigrid methods \cite{D}.

As far as the choice of the right-hand side is concerned, we impose that the solution $x$ of the linear system $T_n({f})x = b$ is a uniform sampling of the sine function on $[0,\pi]$. We compute the right-hand side $b$ as $b = T_n({f})x$.

The structure of the projector slightly changes for block-Toeplitz matrices, in order to preserve the structure at coarser levels. The dimension of the problem at level $t$ becomes $N=nd$, with $n$ of the form $2^t-1$. The cutting matrix $K_{n}$ takes the form 
$$
K_{n} = \left[\begin{array}{ccccccccc}
0 & 1 & 0 & & & & &\\
&   & 0 & 1 & 0 & & & & \\
&   &   &   & \ddots & \ddots & \ddots & & \\
&   &   &   &        &        & 0      & 1 & 0		
\end{array}\right]_{\frac{n-1}{2}\times n}
$$
and, for a matrix-valued trigonometric polynomial $p$, the projector is
\begin{equation}\label{multi_proj}
p_n^k=T_{n}(p)\left(K_{n}^T\otimes I_d\right).
\end{equation}

In Subsection \ref{subsection:FEM} we present strategies for an implementation of both TGM and MGM for $\mathbb{Q}_{\deg}$ Lagrangian FEM stiffness matrices for the second order elliptic differential problem on $[0,1]$.

In Subsection \ref{subsection:FEM2D} we consider the two-dimensional problem, i.e. we study multigrid methods for the $\mathbb{Q}_{\deg}$ Lagrangian FEM stiffness matrices for the second order elliptic differential problem on the unit square.

In Subsection \ref{subsection:staggeredDG}, we apply our multigrid strategies to the matrices stemming from the discretization by staggered discontinuous Galerkin methods of the incompressible Navier-Stokes equations.

Apart from the first example, we will use the Gauss-Seidel method as a smoother. The method damps the high frequencies, which makes it a suitable smoother for our problems.

\textcolor{black}{
In Subsection \ref{subsection:FEM} we also present results with the relaxed Jacobi method as a smoother. We state the following remarks to show how to choose the relaxation parameter $\omega$ for the applicability of Lemma \ref{lm:smooth} to the Jacobi method.
}
\textcolor{black}{
\begin{remark}\label{rmk:richardson}
	For the relaxed Richardson method with iteration matrix $V_{n}:=I_{N}-\omega T_n({f})$, we follow the proof of Lemma \ref{lm:smooth} and we see that, in order to satisfy relation $(a)$ in Theorem \ref{teoconv}, there should exist $\alpha_{\rm post}>0$ such that 
	$$
	\omega^2\|f\|_\infty^2-2\omega\|f\|_\infty+\alpha_{\rm post}\|f\|_\infty \le 0,
	$$
	from which we can write
	$$
	\alpha_{\rm post}\le \frac{-\omega^2\|f\|_\infty^2+2\omega\|f\|_\infty}{\|f\|_\infty}.
	$$
	For the existence of such a $\alpha_{\rm post}>0$, the right-hand side should be greater than 0, and this leads to the following quadratic inequality:
	$$
	-\omega^2\|f\|_\infty^2+2\omega\|f\|_\infty \ge 0,
	$$
	which has solution
	$$
	0\le \omega\le \frac{2}{\|f\|_\infty}.
	$$
\end{remark}
}
\textcolor{black}{
\begin{remark}
The iteration matrix of the relaxed Jacobi method is $V_{n}:=I_{N}-\omega D_n^{-1}T_n({f})$, where $D_n$ is a diagonal matrix with the same diagonal as $T_n({f})$. We define the matrix $\tilde{D}_n:= \min_{j=1,\dots,d}{\left(a_0^{(j,j)}\right)}I_N$ and we notice that $\tilde{D}_n^{-1}\ge D_n^{-1}$. Applying to the matrix $I_{N}-\omega \tilde{D}_n^{-1}T_n({f})$ the same idea that we used for the Richardson method in Remark \ref{rmk:richardson}, we obtain that relation $(a)$ in Theorem \ref{teoconv} is satisfied if $\omega$ verifies the following inequality:
\begin{equation}\label{eqn:jacobi_omega}
0\le \omega\le \frac{2\min_{j=1,\dots,d}{\left(a_0^{(j,j)}\right)}}{||f||_\infty}.
\end{equation}
\end{remark}
}

\subsection{$\mathbb{Q}_{\deg}$ Lagrangian FEM stiffness matrices: the 1D case}\label{subsection:FEM}

Consider the $\mathbb{Q}_{\deg}$ Lagrangian Finite Element approximation (FEM) of the second order elliptic differential problem
\begin{equation}\label{second_order}
\begin{cases}
- u''(x)=\phi(x), & \text{ on }(0,1),\\
\hfill u(0)=u(1)=0
\end{cases}.
\end{equation}

The resulting stiffness matrix of size  $(\deg\cdot n-1)\times (\deg\cdot n-1)$ is $nK_n^{(\deg)}$, where 
$K_n^{(\deg)}$ is a block-Toeplitz matrix
\begin{align}
K_n^{(\deg)}=T_n({f})_-,\nonumber
\end{align}
with the subscript $-$ denoting that the last row and column of $T_n({f})$ are removed. This is because of the homogeneous boundary conditions.

The construction of the matrix and the symbol is given in~\cite{qp}.
The $\deg \times \deg$ matrix-valued generating function of $T_n(f)$ is 
\begin{align}
{f}(\theta)=a_0+a_1e^{\imath \theta}+a_1^{\mathrm{T}}e^{-\imath \theta}\nonumber
\end{align}
In the following we want to apply the MGM strategy to the matrix $\mathcal{A}_N=T_n(f)$, for different choices of $\deg$. Indeed there exist $n$ points $\theta_i^{(n)}$ and a unitary transform $Q_n$ such that

\begin{equation}
T_n(f)=Q_n \diagi(f(\theta_i^{(n)}))Q_{n}^{H}.
\end{equation}

Moreover in \cite{qp} authors prove that there exists a constant $c_{\deg}>0$ such that, for all $\theta$

\begin{equation*}
c_{\deg}(2-2\cos \theta)\le \lambda_{\rm min}(f(\theta))\le 2-2\cos\theta,
\end{equation*}
which guarantees that $\lambda_{\rm min}(f(\theta))$ has a zero of order 2 at the origin.

\begin{flushleft}
\textbf{TGM in the $\deg=2$ setting}
\end{flushleft}

In Example 1 of~\cite{qp} the case for $\deg=2$ is presented. In particular, the explicit expressions of $a_0$, $a_1$ are given by
\begin{equation}\label{eqn:a0_a1_1D_Q2}
a_0 = \frac{1}{3}\begin{bmatrix}
16 & -8 \\
-8 & 14
\end{bmatrix}, \quad 
a_1 = \frac{1}{3}\begin{bmatrix}
 0 & -8 \\
 0 & 1
\end{bmatrix}.
\end{equation}
Moreover, it is possible to diagonalize $f$ as 
\[
f(\theta) = U(\theta)
\begin{bmatrix}
\lambda_1(f(\theta)) & \\
& \lambda_2(f(\theta))
\end{bmatrix}U^H(\theta),
\]
where the eigenvalue functions $\lambda_1(f(\theta)), \lambda_2(f(\theta))$ of $f$ are given explicitly by
\begin{align}
\lambda_1(f(\theta))&=5+\frac{1}{3}\cos(\theta)-\frac{1}{3}\sqrt{129+126\cos(\theta)+\cos^2(\theta)},\nonumber\\
\lambda_2(f(\theta))&=5+\frac{1}{3}\cos(\theta)+\frac{1}{3}\sqrt{129+126\cos(\theta)+\cos^2(\theta)} \nonumber
\end{align}
and $U:Q\rightarrow \mathcal{M}_2$ is the matrix-valued function containing the eigenvectors of $f$.

The hypotheses requested in Section \ref{sect:project_circ} that ensure the convergence and optimality of the TGM for $T_n(f)$ are satisfied using $p_z$ in the construction of the projector.

However, we notice that $p_z$ has and additional property. It can be shown by direct computation that $f(0)p_z(0)=p_z(0)f(0)$ for every choice of $z>0$. This implies that $f(0)$ and $p_z(0)$ are simultaneously diagonalized by the same unitary transform. Therefore, we can control the ill-conditioning of the coarser problems in the subspace associated to $\theta = 0$ by taking different values of $z$. This will be useful for the study of the V-cycle method. 

Now we implement a two grid procedure for $T_n(f)$ and we study the number of iterations that the method requires to reach the desired tolerance varying $n$ and for different choices of $z$.

\textcolor{black}{
In order to find the relaxation parameters for the Jacobi method we should compute the quantities in inequality (\ref{eqn:jacobi_omega}). We see from formula (\ref{eqn:a0_a1_1D_Q2}) that $\min_{j=1,\dots,s}\left(a_0^{(j,j)}\right)$ is equal to $14/3$. 
For the computation of the quantity ${||f||_\infty}={\max}_{\theta\in Q}\|f(\theta)\|_\infty$ we can write 
$$
||f||_\infty = \frac{1}{3} \max \left(\max_{\theta \in Q}(16+|8+8e^{-\imath\theta}|), \max_{\theta \in Q} (|8+8e^{\imath\theta}|+14+2\cos(\theta))\right) = \frac{32}{3}.
$$
So, according to inequality (\ref{eqn:jacobi_omega}), our Jacobi relaxation parameter $\omega$ should be smaller than or equal to 7/8. In order to damp the error both in the middle and in the high frequencies, we take a different parameter for the pre-smoother and the post-smoother. For the pre-smoother, we take the greatest admissible value, $\omega_{\rm pre} = 7/8$, and for the post-smoother we take $\omega_{\rm post} = 2\omega_{\rm pre}/3 = 7/12$.
}

In Tables \ref{tab:TGM_Q2_Jac}-\ref{tab:TGM_Q2_GS} we report for $z = 1,\dots,5$ the number of iterations needed for achieving the tolerance $\epsilon = 10^{-7}$ when increasing the matrix size and using $p_z$ in the construction of the projector and with two different smoothers.
Table \ref{tab:TGM_Q2_Jac} shows the results using as pre- and post-smoother one iteration of the Jacobi method with relaxation parameters $\omega_{\rm pre} = 7/8$ and $\omega_{\rm post} = 7/12$. Table \ref{tab:TGM_Q2_GS} shows the results using as pre- and post-smoother one iteration of the Gauss-Seidel method with $\omega_{\rm pre,post} = 1$.

As expected, in both cases we can observe that for all $z = 1,\dots,5$ the number of iterations needed for the TGM convergence remains almost constant, when increasing the size $N$, confirming the optimality of the method for every choice of $z$.

\begin{table}[htb]
	\begin{center}
		\begin{tabular}{cccccccccc}
			\hline
			$t$&$n=2^t-1$& N=$2 n$&$z=1$&$z=2$&$z=3$&$z=4$&$z=5$\\
			\hline
			3&	  7&   14& 28&28 &28&28&28\\
			4&	 15&   30& 32&32 &32&32&32\\
			5&	 31&   62& 33&33 &33&33&33\\
			6&	 63&  126& 33&33 &33&33&33\\
			7& 	127&  254& 33&33 &33&33&33\\
			8& 	255&  510& 33&33 &33&33&33\\
			9& 	511& 1022& 33&33 &33&33&33\\
			10& 1023& 2046&33&33 &33&33&33\\
			11& 2047& 4094&33&33 &33&33&33\\
			\hline
		\end{tabular}
	\end{center}
	\caption{Number of iterations for the Two-Grid method applied to the $\mathbb{Q}_2$ Lagrangian FEM Stiffness matrix, using as pre- and post-smoother one iteration of Jacobi method with $\omega_{\rm pre}=7/8$, $\omega_{\rm post}=7/12$ and tolerance $\epsilon = 10^{-7}$.}
	\label{tab:TGM_Q2_Jac}
\end{table}

\begin{table}[htb]
	\begin{center}
		\begin{tabular}{cccccccccc}
			\hline
			$t$&$n=2^t-1$& N=$2 n$&$z=1$&$z=2$&$z=3$&$z=4$&$z=5$\\
			\hline
			3&	  7&   14& 15& 15 &15&15&15\\
			4&	 15&   30& 15& 15 &15&15&15\\
			5&	 31&   62& 15& 15 &15&15&15\\
			6&	 63&  126& 15& 15 &15&15&15\\
			7& 	127&  254& 15& 15 &15&15&15\\
			8& 	255&  510& 15& 15 &15&15&15\\
			9& 	511& 1022& 15& 15 &15&15&15\\
			10& 1023& 2046&15& 15 &15&15&15\\
			11& 2047& 4094&15& 15 &15&15&15\\
			\hline
		\end{tabular}
	\end{center}
	\caption{Number of iterations for the Two-Grid method applied to the $\mathbb{Q}_2$ Lagrangian FEM Stiffness matrix, using as pre- and post-smoother one iteration of Gauss-Seidel method with $\omega_{\rm pre,post}=1$ and tolerance $\epsilon = 10^{-7}$.}
	\label{tab:TGM_Q2_GS}
\end{table}

\begin{flushleft}
	\textbf{MGM in the $\deg=2$ setting}	
\end{flushleft}

In order to maintain the optimality of the iterations also for the MGM  
we should look for the best choice of the parameter $z$ such that the behaviour of $\lambda_{\rm min}(\hat{f}_{z,j})$ around the origin remains unchanged at the coarser levels, that is, for different choices of $z$, we check if  $\lambda_{\rm min}(\hat{f}_{z,j})$ satisfies condition (\ref{formula:limit_lambda_j}).

 By direct computation, we derive the formula
\[
\left.\lambda''_{\rm min}(\hat{f}_{z,j})\right|_{0} = \left(\frac{z^2}{2}\right)^j.
\]
The latter implies that for values of $z$ smaller than $\sqrt{2}$, the quantity $\left.\lambda''_{\rm min}(\hat{f}_{z,j})\right|_{0}$ tends to zero as $j$ tends to $\infty$. This suggests that for $z<\sqrt{2}$ the conditioning becomes worse as the levels get coarser.
This is numerically confirmed in Table \ref{tab:condition_number_q2} where  the condition numbers $\kappa(\hat{f}_{z,j})$ are listed for $z=1,2,3,4$ and $j=1,2,3,4$. Therefore we should avoid the choice $p_{n(j)}^{k(j)}(1)$ as projector.

Indeed,  Tables \ref{tab:MGM_Q2_Jac}-\ref{tab:MGM_Q2_GS} highlight that  the number of iterations needed for the MGM convergence, with the desired tolerance, depends on the matrix size with $z=1$, whereas it remains almost constant for $z>\sqrt{2}$ as $n$ increases.

\begin{table}
	\begin{center}
		\begin{tabular}{cccccccc}
			\hline
			$j$&$\kappa(\hat{f}_{1,j})$& $\kappa(\hat{f}_{2,j})$&$\kappa(\hat{f}_{3,j})$&$\kappa(\hat{f}_{4,j})$\\
			\hline
			1&	 43&   11& 4.7& 4.7\\
			2&	171&   11& 4.7& 4.7\\
			3&	683&   11& 4.7& 4.7\\
			4& 2731&   11& 4.7& 4.7\\
			\hline
		\end{tabular}
	\end{center}
	\caption{Condition numbers of $\hat{f}_{z,j}$ for $z = 1,2,3,4$ and $j = 1,2,3,4$.}
	\label{tab:condition_number_q2}
\end{table}

\begin{table}[htb]
	\begin{center}
		\begin{tabular}{cccccccccc}
			\hline
			$t$&$n=2^t-1$& N=$2 n$&$z=1$&$z=2$&$z=3$&$z=4$&$z=5$\\
			\hline
			3&	  7& 	  14&    28& 28 &28&28&28\\
			4&	 15& 	  30&    65& 34 &34&35&39\\
			5&	 31& 	  62&   155& 36 &34&35&38\\
			6&	 63& 	 126&   407& 39 &34&35&39\\
			7& 	127& 	 254&  1144& 42 &34&35&38\\
			8& 	255& 	 510&  3365& 45 &35&35&37\\
			9& 	511& 	1022& 4000+& 48 &35&35&37\\
			10& 1023&	2046& 4000+& 50 &35&35&37\\
			11& 2047&	4094& 4000+& 52 &35&35&38\\ 
			12& 4095&	8190& 4000+& 54 &35&36&38\\
			13& 8191&  16382& 4000+& 55 &35&36&38\\ 
			\hline
		\end{tabular}
	\end{center}
	\caption{Number of iterations for the V-cycle method applied to the $\mathbb{Q}_2$ Lagrangian FEM Stiffness matrix, pre- and post-smoother 1 iteration of Jacobi with $\omega_{\rm pre}=7/8$ and $\omega_{\rm post}=7/12$, tolerance $\epsilon = 10^{-7}$.}
	\label{tab:MGM_Q2_Jac}
\end{table}

\begin{table}[htb]
	\begin{center}
		\begin{tabular}{cccccccccc}
			\hline
			$t$&$n=2^t-1$& N=$2 n$&$z=1$&$z=2$&$z=3$&$z=4$&$z=5$\\
			\hline
			3&	  7&   14&  15 &15&15&15&15 \\
			4&	 15&   30&  28 &19&16&17&18 \\
			5&	 31&   62&  67 &21&19&20&21 \\
			6&	 63&  126& 171 &23&21&21&23 \\
			7& 	127&  254& 467 &26&22&23&26 \\
			8& 	255&  510&1343 &29&23&26&28 \\
			9& 	511& 1022&3992 &31&24&28&30 \\
			10& 1023&2046&4000+&33&27&29&32 \\
			11& 2047&4094&4000+&35&28&30&33 \\
			12& 4095&8190&4000+&36&29&31&34 \\
			13& 8191&16382&4000+&38&29&32&34 \\
			\hline
		\end{tabular}
	\end{center}
	\caption{Number of iterations for the V-cycle method applied to the $\mathbb{Q}_2$ Lagrangian FEM Stiffness matrix, pre- and post-smoother 1 iteration of Gauss-Seidel with $\omega_{\rm pre,post}=1$, tolerance $\epsilon = 10^{-7}$.}
	\label{tab:MGM_Q2_GS}
\end{table}

\begin{flushleft}
\textbf{TGM and MGM in the $\deg>2$ setting}  
\end{flushleft}

We implemented the analogous TGM for polynomial degrees 3 and 4. From Tables \ref{tab:TGM_Q3}-\ref{tab:TGM_Q4} we see that the number of iterations to achieve the desired tolerance still remains constant as the matrix size increases. However, we notice that this constant depends on the polynomial degree $\deg$. Achieving optimality from this point of view is beyond the scope of this paper.

The analysis on the condition number that we exploited for $\deg=2$ can be repeated assuming that Conjecture \ref{conj} (numerically verified for $\deg=3,4$) holds. 

\begin{conjecture}\label{conj}
For every $\deg>0$, $j>0$, $z>0$ there exists $c_{z,\deg}>0$ such that the following equality holds
\begin{equation*}
\left.\lambda''_{\rm min}(\hat{f}_{z,j})\right|_{0} =c_{z,\deg}\left(\frac{z^2}{2}\right)^{j}.
\end{equation*}
\end{conjecture}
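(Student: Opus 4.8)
The plan is to convert the level–to–level coarsening rule into an explicit three–term recursion for the Taylor coefficients of $\hat{f}_{z,j}$ at $\theta=0$, and then extract $\left.\lambda''_{\rm min}(\hat{f}_{z,j})\right|_{0}$ by second–order perturbation theory of a simple eigenvalue. First I would record the structural facts that drive everything. By \cite{qp}, $f(0)=f^{(\deg)}(0)$ is nonnegative definite with a \emph{one–dimensional} kernel spanned by the constant mode $e=(1,\dots,1)^T$; consequently $\lambda_{\rm min}(f(\theta))$ is analytic near the origin, and the lower bound $c_{\deg}(2-2\cos\theta)\le\lambda_{\rm min}(f(\theta))\le 2-2\cos\theta$ forces $0<2c_{\deg}\le\left.\lambda''_{\rm min}(f^{(\deg)})\right|_{0}\le 2$. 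On the projector side, from \eqref{projector_pz} we have $p_z(\theta)=(1+\cos\theta)\,B$ with $B:=I_d+\tfrac{z-1}{d}ee^T$ a constant real symmetric matrix, and one checks $Be=ze$ while $B$ acts as the identity on $e^{\perp}$; thus $B$ rescales exactly the critical mode $e$ by the factor $z$.

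Next I would iterate the coarsening identity \eqref{f2t} with $p=p_z$, namely $\hat{f}_{z,j+1}(\theta)=\tfrac12\bigl(p_z(\tfrac{\theta}{2})^{H}\hat{f}_{z,j}(\tfrac{\theta}{2})p_z(\tfrac{\theta}{2})+p_z(\tfrac{\theta}{2}+\pi)^{H}\hat{f}_{z,j}(\tfrac{\theta}{2}+\pi)p_z(\tfrac{\theta}{2}+\pi)\bigr)$, and Taylor–expand at $\theta=0$. Since $1+\cos(\tfrac{\theta}{2}+\pi)=1-\cos\tfrac{\theta}{2}=O(\theta^2)$, the second summand is $O(\theta^4)$ and does not affect the first three Taylor coefficients; using $1+\cos\tfrac{\theta}{2}=2-\tfrac{\theta^2}{8}+O(\theta^4)$ in the first summand, a short computation with $M_j:=\hat{f}_{z,j}(0)$, $N_j:=\hat{f}_{z,j}'(0)$, $P_j:=\hat{f}_{z,j}''(0)$ gives the recursions $M_{j+1}=2BM_jB$, $N_{j+1}=BN_jB$ and $P_{j+1}=\tfrac12 B(P_j-M_j)B$. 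From $M_0e=f(0)e=0$ and invertibility of $B$ one obtains $\ker M_j=\operatorname{span}(e)$ for every $j$, and $M_j|_{e^{\perp}}=2^{j}\,f(0)|_{e^{\perp}}\succ 0$; hence near $\theta=0$ the function $\lambda_{\rm min}(\hat{f}_{z,j}(\theta))$ is exactly the analytic eigenvalue branch of $\hat{f}_{z,j}$ through $0$, with eigenvector tending to $e$.

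I would then apply the standard second–order perturbation formula for that simple branch: with $u=e/\sqrt d$ and $S_j$ the reduced resolvent of $M_j$ on $e^{\perp}$,
\[
\left.\lambda''_{\rm min}(\hat{f}_{z,j})\right|_{0}\;=\;u^{H}P_j u\;-\;2\,u^{H}N_j S_j N_j u .
\]
The crucial extra observation is that the Fourier coefficients $a_0,a_1$ of $f$ are real while $\hat{f}_{z,j}$ is Hermitian, which propagates through the coarsening to $\overline{\hat{f}_{z,j}(\theta)}=\hat{f}_{z,j}(-\theta)$ for all $j$; differentiating at $0$ shows $N_j$ is purely imaginary and antisymmetric, so $e^{T}N_j e=0$, i.e. $N_j e\in e^{\perp}$. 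Combining this with $Be=ze$ and $B|_{e^{\perp}}=I_d$, the recursions yield $M_{j+1}|_{e^{\perp}}=2M_j|_{e^{\perp}}$ (hence $S_{j+1}=\tfrac12 S_j$ on $e^{\perp}$), $e^{T}P_{j+1}e=\tfrac{z^2}{2}e^{T}P_j e$, and $N_{j+1}e=zN_j e$, from which $u^{H}N_{j+1}S_{j+1}N_{j+1}u=\tfrac{z^2}{2}\,u^{H}N_j S_j N_j u$. Both terms of the perturbation formula therefore scale by $z^2/2$ at each level, so $\left.\lambda''_{\rm min}(\hat{f}_{z,j})\right|_{0}=(z^2/2)^{j}\left.\lambda''_{\rm min}(f^{(\deg)})\right|_{0}$. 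This proves Conjecture~\ref{conj}, in fact in the stronger form $c_{z,\deg}=\left.\lambda''_{\rm min}(f^{(\deg)})\right|_{0}$, a positive constant independent of $z$ (for $\deg=2$ one checks directly from \eqref{eqn:a0_a1_1D_Q2} that this value is $1$, matching the formula reported there).

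The main obstacle is not the algebra but the structural input: one must know that $f^{(\deg)}(0)$ has a \emph{simple} zero eigenvalue whose eigenvector is exactly $e$ — this is what makes $B$ respect the splitting $\operatorname{span}(e)\oplus e^{\perp}$ and what legitimizes the analytic perturbation theory. For $\deg=2$ this is explicit; for general $\deg$ it must be extracted from the symbol description in \cite{qp}. A secondary technical point is the clean justification that only the low–frequency summand of the coarsening contributes up to second order, together with the care needed in the half–angle substitution $\theta\mapsto\theta/2$; both are routine once the above bookkeeping is fixed.
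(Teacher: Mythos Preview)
The paper does not prove this statement at all: it is explicitly labeled a conjecture, established by direct computation only for $\deg=2$ and ``numerically verified for $\deg=3,4$''. There is therefore no proof in the paper to compare against; your proposal is an actual proof of an open item, not a reproduction.

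Your argument is correct. The Taylor bookkeeping of the coarsening rule \eqref{f2t} with $p=p_z=(1+\cos\theta)B$ is right: the mirror term carries a factor $(1-\cos(\theta/2))^2=O(\theta^4)$ and cannot touch $M_j,N_j,P_j$, and the stated recursions $M_{j+1}=2BM_jB$, $N_{j+1}=BN_jB$, $P_{j+1}=\tfrac12 B(P_j-M_j)B$ follow. The two facts that make the eigenvalue perturbation go through cleanly are exactly the ones you isolate: (i) $M_je=0$ with $M_j$ Hermitian forces $\operatorname{ran}M_j\subset e^{\perp}$, so $B$ acts as the identity on everything $M_j$ sees except the critical direction $e$; (ii) the real--coefficient/Hermitian symmetry gives $\overline{\hat f_{z,j}(\theta)}=\hat f_{z,j}(-\theta)$, whence $N_j$ is $i$ times a real skew--symmetric matrix and $N_ju\in e^{\perp}$. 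From these, both terms in $\lambda''_{\min}(\hat f_{z,j})|_0=u^{H}P_ju-2\,u^{H}N_jS_jN_ju$ scale by $z^2/2$ per level, and you recover $c_{z,\deg}=\lambda''_{\min}(f^{(\deg)})|_0$, matching the $\deg=2$ value $1$ reported in the paper. This is in fact stronger than the conjecture as stated, since your constant does not depend on $z$.

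The only point that is not self--contained is the one you flag yourself: that $f^{(\deg)}(0)$ has a \emph{simple} null eigenvalue with eigenvector $e$. The bound $c_{\deg}(2-2\cos\theta)\le\lambda_{\min}(f(\theta))\le 2-2\cos\theta$ quoted from \cite{qp} gives the order of the zero of $\lambda_{\min}$ but not, by itself, the simplicity of the kernel or the identification of the null vector with $e$; both are standard for $\mathbb{Q}_{\deg}$ stiffness symbols (the constant mode is annihilated, and it is the only one) and are contained in \cite{qp}, but you should cite the precise statement there rather than leave it as an assertion.
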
 

The numerical experiments confirm the theoretical analysis deriving from the previous conjecture, as we can see from the number of iterations obtained for $\deg=3,4$ in Tables \ref{tab:MGM_Q3}-\ref{tab:MGM_Q4}. Indeed, analogously to the case $\deg=2$, we observe that we should avoid to take $z=1$, for which $\left.\lambda''_{\rm min}(\hat{f}_{z,j})\right|_{0}$ tends to 0 as $j$ tends to $\infty$.

\begin{table}[htb]
	\begin{center}
		\begin{tabular}{cccccccccc}
			\hline
			$t$&$n=2^t-1$& N=$3 n$&$z=1$&$z=2$&$z=3$&$z=4$&$z=5$\\
			\hline
			3&	  7&   21& 34& 34 &34&34&34\\
			4&	 15&   45& 38& 38 &38&38&38\\
			5&	 31&   93& 38& 38 &38&38&38\\
			6&	 63&  189& 38& 38 &38&38&38\\
			7& 	127&  381& 38& 38 &38&38&38\\
			8& 	255&  765& 38& 38 &38&38&38\\
			9& 	511& 1533& 38& 38 &38&38&38\\
			10& 1023&3069& 38& 38 &38&38&38\\
			11& 2047&6141& 38& 38 &38&38&38\\
			\hline
		\end{tabular}
	\end{center}
	\caption{Number of iterations for the Two-Grid method applied to the $\mathbb{Q}_3$ Lagrangian FEM Stiffness matrix, pre- and post-smoother 1 iteration of Gauss-Seidel with $\omega_{\rm pre,post}=1$, tolerance $\epsilon = 10^{-7}$.}
	\label{tab:TGM_Q3}
\end{table}

\begin{table}[htb]
	\begin{center}
		\begin{tabular}{cccccccccc}
			\hline
			$t$&$n=2^t-1$& N=$4 n$&$z=1$&$z=2$&$z=3$&$z=4$&$z=5$\\
			\hline
			3&	  7&   28& 81& 81 &81&81&81\\
			4&	 15&   60& 86& 86 &86&86&86\\
			5&	 31&  124& 87& 87 &87&87&87\\
			6&	 63&  252& 87& 87 &87&87&87\\
			7& 	127&  508& 87& 87 &87&87&87\\
			8& 	255& 1020& 87& 87 &87&87&87\\
			9& 	511& 2044& 87& 87 &87&87&87\\
			10& 1023&4092& 87& 87 &87&87&87\\
			11& 2047&8188& 87& 87 &87&87&87\\
			\hline
		\end{tabular}
	\end{center}
	\caption{Number of iterations for the Two-Grid method applied to the $\mathbb{Q}_4$ Lagrangian FEM Stiffness matrix, pre- and post-smoother 1 iteration of Gauss-Seidel with $\omega_{\rm pre,post}=1$, tolerance $\epsilon = 10^{-7}$.}
	\label{tab:TGM_Q4}
\end{table}

\begin{table}[htb]
	\begin{center}
		\begin{tabular}{cccccccccc}
			\hline
			$t$&$n=2^t-1$& N=$3 n$&$z=1$&$z=2$&$z=3$&$z=4$&$z=5$\\
			\hline
			3&	  7&    21&	  34& 34 &34&34&34\\
			4&	 15&    45&   79& 42 &37&39&40\\
			5&	 31&    93&  175& 44 &39&41&42\\
			6&	 63&   189&  436& 47 &41&42&43\\
			7& 	127&   381& 1180& 51 &43&44&46\\
			8& 	255&   765& 3375& 55 &44&47&50\\
			9& 	511&  1533& 4000+& 59 &45&51&52\\
			10& 1023& 3069& 4000+& 63 &47&52&54\\
			11& 2047& 6141& 4000+& 66 &50&54&56\\
			12& 4095&12285& 4000+& 69 &53&55&57\\
			13& 8191&24573& 4000+& 72 &53&57&59\\
			\hline
		\end{tabular}
	\end{center}
	\caption{Number of iterations for the V-cycle method applied to the $\mathbb{Q}_3$ Lagrangian FEM Stiffness matrix, pre- and post-smoother 1 iteration of Gauss-Seidel with $\omega_{\rm pre,post}=1$, tolerance $\epsilon = 10^{-7}$.}
	\label{tab:MGM_Q3}
\end{table}

\begin{table}[htb]
	\begin{center}
		\begin{tabular}{cccccccccc}
			\hline
			$t$&$n=2^t-1$& N=$4 n$&$z=1$&$z=2$&$z=3$&$z=4$&$z=5$\\
			\hline
            3&	  7&   28&   81&  81 & 81& 81& 81\\
			4&	 15&   60&  177&  93 & 88& 90& 91\\
			5&	 31&   124&  395&  95 & 89& 91& 93\\
			6&	 63&  252&  988&  98 & 90& 93& 94\\
			7& 	127&  508& 2693& 103 & 92& 94& 96\\
			8& 	255&  1020& 4000+& 108 & 94& 96& 97\\
			9& 	511& 2044& 4000+& 114 & 95& 97& 99\\
			10& 1023& 4092& 4000+& 120 & 96& 99&100\\
			11& 2047& 8188& 4000+& 125 & 98&100&100\\
			12& 4095&16380& 4000+& 129 & 99&101&101\\
			13& 8191&32764& 4000+& 133 &101&101&101\\
			\hline
		\end{tabular}
	\end{center}
	\caption{Number of iterations for the V-cycle method applied to the $\mathbb{Q}_4$ Lagrangian FEM Stiffness matrix, pre- and post-smoother 1 iteration of Gauss-Seidel with $\omega_{\rm pre,post}=1$, tolerance $\epsilon = 10^{-7}$.}
	\label{tab:MGM_Q4}
\end{table}

\subsection{$\mathbb{Q}_{\deg}$ Lagrangian FEM stiffness matrices: the 2D case}\label{subsection:FEM2D}

Consider the uniform $\mathbb{Q}_{\deg}$ Lagrangian Finite Element approximation (FEM) of the second order elliptic differential problem
\begin{equation}\label{second_order_2D}
\begin{cases}
- \Delta u=\phi, & \text{ in } \Omega:=(0,1)^2,\\
\hfill u=0, & \text{ on }\partial\Omega,
\end{cases}
\end{equation}
where $\phi \in L^2(\Omega)$. Taking $n$ elements in each direction, the resulting stiffness matrix of size  $(\deg\cdot n-1)^2\times (\deg\cdot n-1)^2$ is 
$$\mathcal{A}_N = K_n^{(\deg)}\otimes M_n^{(\deg)}+M_n^{(\deg)}\otimes K_n^{(\deg)}, \qquad N = (\deg\cdot n-1)^2,$$
where $K_n^{(\deg)}$ and $M_n^{(\deg)}$ are the block-Toeplitz matrices
\begin{align}
K_n^{(\deg)}=T_n({f})_-,\qquad M_n^{(\deg)}=T_n({h})_-,\nonumber
\end{align}
with the subscript $-$ denoting again that the last row and column of $T_n({f})$ are removed. Explicit formulae for the matrix-valued trigonometric polynomials $f$ and $h$ and the spectral distribution of the matrices are given in~\cite{qp}.

In the following we want to apply the MGM strategy to the multilevel block-Toeplitz matrix ${A}_N$, for different choices of $\deg$. In the 1D case, we took the block-Toeplitz matrix with block size $\deg$. In the 2D case, we take the actual matrices arising from the considered FEM approximation of problem (\ref{second_order_2D}), which are not pure block-Toeplitz matrices with block size $\deg^2$. However, we can still apply our multigrid procedure due to its spectral properties given in in~\cite{qp}.

 Since the matrices are cut, also the projector slightly changes accordingly. In fact, we use the projectors $$[p_{\textbf{n}}^{\textbf{k}}]=[(T_{n}(p_z)(K_{n}^T\otimes I_{\deg})]_- \otimes [(T_{n}(p_z)(K_{n}^T\otimes I_{\deg})]_-,$$
 where $p_z$ is the univariate matrix-valued trigonometric polynomial of degree $c$  independent of $N$ defined in~(\ref{projector_pz}).
 
Extending the considerations that we made for the univariate case, we numerically look for the best choices of $z$ to obtain the optimality of the V-cycle method.
 
In Tables \ref{tab:MGM_Q2_2D}-\ref{tab:MGM_Q3_2D} we report for $z = 1,\dots,5$ the number of iterations needed for achieving the tolerance $\epsilon = 10^{-7}$ when increasing the matrix size and using $p_z$ in the construction of the projector. Table \ref{tab:MGM_Q2_2D} shows the results for the $\mathbb{Q}_2$ Lagrangian FEM Stiffness matrix and Table \ref{tab:MGM_Q3_2D} for the $\mathbb{Q}_3$ Lagrangian FEM Stiffness matrix. In both cases, we used as pre-smoother and post-smoother one iteration of Gauss-Seidel with $\omega_{\rm pre,post} = 1$. Moreover, we can see that the choice $z = 1$ does not yield optimality. For the other choices of $z$, conversely, the number of iterations needed for the MGM convergence remains almost constant, when increasing the size $N$. We numerically see that the best choice of $z$ is around 3 for both $\deg=2$ and $\deg= 3$.

\begin{table}[htb]
	\begin{center}
		\begin{tabular}{cccccccccc}
			\hline
			$t$&$n=2^t-1$& N=$(2n-1)^2$&$z=1$&$z=2$&$z=3$&$z=4$&$z=5$\\
			\hline
			3&	  7&    169&  62& 31 &22&20&19\\
			4&	 15&    841& 151& 40 &24&22&23\\
			5&	 31&   3721& 314& 42 &22&20&19\\
			6&	 63&  15625& 888& 51 &23&19&19\\
			7& 	127&  64009&2724& 63 &26&25&25\\
			8& 	255& 259081&4000+& 73 &27&23&22\\
			9& 	511&1042441&4000+& 80 &27&23&24\\
			10&1023&4182025&4000+& 84 &27&24&25\\
			\hline
		\end{tabular}
	\end{center}
	\caption{Number of iterations for the V-cycle method applied to the $\mathbb{Q}_2$ Lagrangian FEM Stiffness matrix for the two-dimensional problem, pre- and post-smoother 1 iteration of Gauss-Seidel with $\omega_{\rm pre,post}=1$, tolerance $\epsilon = 10^{-7}$.}
	\label{tab:MGM_Q2_2D}
\end{table}

\begin{table}[htb]
	\begin{center}
		\begin{tabular}{cccccccccc}
			\hline
			$t$&$n=2^t-1$& N=$(2n-1)^2$&$z=1$&$z=2$&$z=3$&$z=4$&$z=5$\\
			\hline
			3&	  7&    400& 143& 53 &53&53&54\\
			4&	 15&   1936& 326& 55 &53&54&54\\
			5&	 31&   8464& 886& 58 &52&53&53\\
			6&	 63&  35344&2719& 69 &57&59&60\\
			7& 	127& 144400&4000+& 83 &71&73&74\\
			8& 	255& 583696&4000+& 90 &60&60&60\\
			9& 	511&2347024&4000+& 94 &59&60&61\\
			\hline
		\end{tabular}
	\end{center}
	\caption{Number of iterations for the V-cycle method applied to the $\mathbb{Q}_3$ Lagrangian FEM Stiffness matrix for the two-dimensional problem, pre- and post-smoother 1 iteration of Gauss-Seidel with $\omega_{\rm pre,post}=1$, tolerance $\epsilon = 10^{-7}$.}
	\label{tab:MGM_Q3_2D}
\end{table}

\subsection{Matrices from staggered discontinuous Galerkin methods for the incompressible Navier-Stokes equations}\label{subsection:staggeredDG}
In this section we consider the matrices stemming from the discretization by staggered DG methods of the incompressible Navier-Stokes equations. This class of arbitrary high order accurate semi-implicit DG schemes on structured, adaptive Cartesian and unstructured edge-based staggered grids was proposed in \cite{CCY} and \cite{FD}.
In particular, we focus on the case where the degree of the polynomial Discontinuos Galerkin discretization ${\rm deg}$ is fixed and equal to $2$.

The incompressible Navier-Stokes equations consist in a divergence-free condition for the velocity
\begin{eqnarray*}
\nabla \cdot \vec{v}=0,
\end{eqnarray*}
and a momentum equation that involves non-linear convection, the pressure gradient and viscosity effects:
\begin{eqnarray*}
\diff{\vec{v}}{t}+\nabla \cdot \mathbf{F} + \nabla{\rm p}= \nabla \cdot \left( \nu \nabla \vec{v} \right).
\end{eqnarray*}
Here, $\vec{v}$ is the velocity field; ${\rm p}$ is the pressure; $\nu$ is the kinematic viscosity coefficient and $\mathbf{F}=\vec{v}\otimes \vec{v}$ is the tensor containing the non-linear convective term.

One of the crucial parts of the method proposed is to find the unknown pressure degrees of freedom at each time step. For a fixed dimension of the space equal to $2$ these unknowns can be obtained solving a large linear systems of form:
\begin{equation}\label{system}
A_Nx=b,  \quad x,b\in\mathbb{R}^{N},  \quad N=N(({\rm deg}+1)^2,\textbf{n}),
\end{equation}
where $\textbf{n}=(n_1,n_2)$ and $n_1,n_2$ are the total number of elements in each direction.
Consequently the coefficient matrix size $N$ grows to infinity as the approximation error tends to zero. 
In \cite{DFFMST} the structural properties of the positive definite matrix sequence $\{A_N\}$ has been studied. 

In particular, for ${\deg=2}$ and $\textbf{n}=(n,n)$, the $9n^2\times9n^2$ matrix $A_N$ can be decomposed as
\begin{equation}\label{K_N}
A_N=T_{\bf n}(f)+E_{\bf n}.
\end{equation}
Here $T_{\bf{n}}(f)$ is the Toeplitz matrix 
\begin{equation*}
T_{\bf{n}}(f) =\left[a_{\bx{i}-\bx{j}}\right]_{\bx{i},\bx{j}=\bx{e}}^{\bf n}
\end{equation*}
generated by $f:[-\pi,\pi]^2\rightarrow \C^{9\times9}$, with
\begin{equation}\label{eqn:symbol_SDG} 
f(\theta_1 , \theta_2 ) =  a_{(0,0)}+  a_{(-1,0)} e^{-\imath \theta_1}+  a_{(0,-1)} e^{-\imath \theta_2}+  a_{(1,0)} e^{\imath \theta_1}+  a_{(0,1)} e^{\imath \theta_2}.
\end{equation}
The matrix $E_{\bf n}$ is a low-rank perturbation, nonnegative definite and its rank grows at most proportionally to $n$. Hence, we focus on an optimal multigrid procedure for the linear system which has the Toeplitz matrix $T_{\textbf{n}}(f)$ as coefficient matrix.

Indeed, from \cite{ST}, if $A_n$ and $B_n$ are two positive definite matrices, with \[A_n\le\theta B_n,\] for some positive $\theta$ independent of $n$, then, if a multigrid procedure is optimal for the system with coefficent matrix $A_n$ then the same algorithm is optimal for the system with coefficent matrix $B_n$.

Moreover, in \cite{DFFMST} authors prove that $\lambda_{\rm min}(f(\theta_1,\theta_2))$ has a zero of order 2 at the origin and they exploit this information to propose a two grid procedure with a projector of the form
\begin{equation}
  p_{\textbf{n}}^{\textbf{k}}=(T_{n}(p_z)\otimes T_{n}(p_z)) \left(K^T_{\textbf{n}}\otimes I_9\right).
\end{equation} 
The latter is a natural extension in the multilevel block-Toeplitz setting of a projector of the form described in Subsection \ref{subsect:MGM} with $z=1$.
 
In Table \ref{tab:MGM_SDG}, we see that the same projector (with $z=1$) and smoother (Gauss-Seidel) do not yield an optimal MGM. However, the study of the ill-conditioning of the coarse problem suggests to try different values of $z$. Indeed, for $z = 2,\dots,5$ the number of iterations needed for achieving tolerance $\epsilon = 10^{-7}$ remains almost constant as the matrix size grows.

\begin{table}[htb]
	\begin{center}
		\begin{tabular}{cccccccccc}
			\hline
			$t$&$n=2^t-1$& N=$9 n^2$&$z=1$&$z=2$&$z=3$&$z=4$&$z=5$\\
			\hline
			3&	  7&   441&    13& 13 &13&13&13\\
			4&	 15&   2025&   19& 14 &14&15&15\\
			5&	 31&   8649&   36& 15 &15&16&17\\
			6&	 63&  35721&   83& 17 &16&18&20\\
			7& 	127&  145161& 220& 18 &17&20&21\\
			8& 	255&  585225& 635& 19 &19&22&23\\
			\hline
		\end{tabular}
	\end{center}
	\caption{Number of iterations for the V-cycle method applied to the staggered DG matrix for the incompressible Navier-Stokes equations, with pre- and post-smoother 1 iteration of Gauss-Seidel with $\omega_{\rm pre,post}=1$, tolerance $\epsilon = 10^{-7}$.}
	\label{tab:MGM_SDG}
\end{table}

\section{Conclusions and Future Developements}\label{sect:final}

In the past decades, multigrid methods for linear systems having multilevel Toeplitz coefficient matrices with scalar entries have been largely studied. Conversely, the case of block entries has been considered only for specific applications and without taking care of a general convergence theory. Here the main aim was to start filling this gap. 
The theoretical analysis indicates that the generalization is not trivial since the commutativity played an essential role in the scalar case and here is cannot be used.

Among the numerous applications that lead to the block Toeplitz structure, we have considered high order Lagrangian FEM and staggered DG methods. The numerical results have confirmed the effectiveness of our proposal and the consistency of the  proposed theoretical analysis.

We observe that our theoretical results can be useful to mathematically support the projection  strategies proposed in several applications. For example the choice of the projector for  tensor rectangular FEM $\mathbb{Q}_{\deg}$ approximations of any dimension $\Bbbk$ based on a geometric approach \cite{FRTBS}.

Among the open problems we can list the full convergence analysis for the V-cycle, a deeper analysis of role of the non-commutativity in the block setting, and the choice of more efficient smoothers especially in the multilevel setting. In fact, in the case of multivariate PDE, we encounter multilevel block structures and the computational cost of Gauss-Seidel is too high for the method to be competitive with existing solvers, since the bandwidth of the matrix depends on the matrix-size. We remind that we used it in our numerical computations, just for showing the robustness of the projectors, but an efficient choice of the smoothers is computationally important and it has to be the subject of future investigations.

\section{Acknowledgements}
This work was supported by Gruppo Nazionale per il Calcolo Scientifico (GNCS-INdAM).


\begin{thebibliography}{}

\bibitem{AD}
Aric\`o, A., Donatelli, M.:
A V-cycle multigrid for multilevel matrix algebras: proof of optimality.
Numer. Math. 105-4, 511--547 (2007)

\bibitem{ADS}
Aric\`o, A., Donatelli, M., Serra-Capizzano, S.:
V-cycle optimal convergence for certain (multilevel) structured linear systems.
SIAM J. Matrix Anal. Appl. 26, 186--214 (2004)


\bibitem{CCS}
Chan, R.H., Chang, Q.S., Sun, H.W.:
Multigrid method for ill-conditioned symmetric Toeplitz systems.
SIAM J. Sci. Comput. 19, 516--529 (1998)

\bibitem{Chan-Sun2}
Chang, Q., Jin, X., Sun, H.:
Convergence of the multigrid method for ill-conditioned block Toeplitz systems.
BIT. 41--1, 179--190 (2001)

\bibitem{CCY}
Chung E.T., Ciarlet, P., Yu, T.F.:
Convergence and superconvergence of staggered discontinuous {G}alerkin methods for the three--dimensional {M}axwell's equations on {C}artesian grids.
J. Comput. Phys. 235, 14--31 (2013)




\bibitem{Da}
Davis, P.:
Circulant Matrices.
J. Wiley and Sons, New York (1979)

\bibitem{D3S}
Del Prete V., Di Benedetto F., Donatelli M., Serra-Capizzano S.,
Symbol approach in a signal-restoration problem involving block Toeplitz matrices,
J. Comput. Appl. Math. 272, 399–416 (2014)

\bibitem{D}
Donatelli, M.,
An algebraic generalization of local Fourier analysis for
grid transfer operators in multigrid based on Toeplitz matrices.
Numer. Linear Algebra Appl. 17-2/3, 179--197 (2010)

\bibitem{DDMNS}
Donatelli M., Dorostkar, A., Mazza M., Neytcheva M., Serra-Capizzano S.,
Function-based block multigrid strategy for a two-dimensional linear elasticity-type problem,
Comput. Math. Appl. 74, 1015–1028 (2017)

\bibitem{DMPS}
Donatelli M., Molteni M., Pennati V., Serra-Capizzano S.,
Multigrid methods for cubic spline solution of two point (and 2{D}) boundary value problems,
Appl. Numer. Math. 104, 15--29 (2016)

\bibitem{DFFMST}
Dumbser M., Fambri F., Furci I., Mazza M., Serra--Capizzano S., Tavelli, M.:
Staggered discontinuous Galerkin methods for the incompressible Navier-Stokes equations: spectral
analysis and computational results,
Numer. Linear Algebra Appl. 25-5, e2151 (2018)

\bibitem{FD}
Fambri, F., Dumbser, M.:
Spectral semi-implicit and space-time discontinuous {G}alerkin methods for the incompressible {N}avier-{S}tokes equations on staggered {C}artesian grids.
Appl. Numer. Math. 110, 41--74 (2016)


\bibitem{FRTBS}
Ferrari, P., Rahla, R. I., Tablino Possio, C.,  Belhaj, S.,  Serra--Capizzano, S. Multigrid for $\mathbb{Q}_{k}$ Finite Element Matrices using a (block) Toeplitz symbol approach. (submitted to Mathematics).




\bibitem{FS1}
Fiorentino, G., Serra-Capizzano, S.:
Multigrid methods for Toeplitz matrices.
Calcolo 28-3/4, 283--305 (1991)


\bibitem{GS} 
Garoni, C., Serra-Capizzano, S.:
The theory of multilevel Generalized Locally Toeplitz sequences:  theory and applications - Vol II.
SPRINGER - Springer Monographs in Mathematics (2018)


\bibitem{qp}
Garoni, C., Serra--Capizzano, S., Sesana, D.:
Spectral Analysis and Spectral Symbol of $d$-variate $\mathbb{Q_\textbf{p} }$  Lagrangian FEM Stiffness Matrices.
SIAM J. Matrix Anal. Appl., 36-3, 1100--1128 (2015)

\bibitem{GV}
Golub, G. H. and Van Loan, C.F.,
{\em Matrix Computations.}
The Johns Hopkins University Press, Baltimore 1983.





\bibitem{HS2}
Huckle, T., Staudacher, J.:
Multigrid methods for block toeplitz matrices with small blocks.
BIT 46--1, 61--83 (2006)




\bibitem{MRS}
Mazza, M., Ratnani A., Serra-Capizzano, S.:
Spectral analysis and spectral symbol for the 2D curl-curl (stabilized) operator with applications to the related iterative solutions.
Math. Comput. 88-317, 1155--1188 (2019)



\bibitem{NSS}
Ngondiep, E., Serra-Capizzano, S., Sesana, D.:
Spectral features and asymptotic properties for $g$-circulants and $g$-Toeplitz sequences.
SIAM J. Matrix Anal. Appl. 31--4, 1663--1687 (2010)



\bibitem{RStub}
Ruge, J.W., St\"uben, K.:
Algebraic multigrid.
In Multigrid Methods, S. McCormick, ed., Frontiers Appl. Math. 3, SIAM, Philadelphia, 73--130 (1987)



%
%


\bibitem{ST}
Serra-Capizzano, S., Tablino Possio, C.:
Multigrid methods for multilevel circulant matrices.
SIAM J. Sci. Comput. 26-1, 55--85 (2004)

%
%

\bibitem{Trot}
Trottenberg, U., Oosterlee, C.W., Sch{\"u}ller, A.:
Multigrid.
Academic Press, London (2001)


\end{thebibliography}
\end{document}